\theoremstyle{plain}
\newtheorem{theorem}{Theorem}[section]
\newtheorem{prop}[theorem]{Proposition}
\newtheorem{corollary}[theorem]{Corollary}
\newtheorem{lemma}[theorem]{Lemma}
\newtheorem{proposition}[theorem]{Proposition}
\renewenvironment{proof}[1][Proof]{\textbf{#1.} }{\ \rule{0.5em}{0.5em} \par }
\theoremstyle{remark}
\theoremstyle{definition}
\newtheorem{remark}[theorem]{Remark}
\def\RR{\mathbb{R}}
\def\EE{\mathbb{E}}
\def\XX{\mathbb{X}}
\def\la{{\lambda}}
\def\si{{\sigma}}
\def\al{{\alpha}}
\def\la{{\lambda}}
\def\th{{\theta}}
\def\th{{\theta}} 
\def\al{{\alpha}}
\def\cov{\hbox{Cov}}
\let\Section=\section
\def\section{\setcounter{equation}{0}\Section}
\title[Ergodic   Estimators  of  double exponential  Ornstein-Ulenbeck  process ]{Ergodic   Estimators  of 
double exponential Ornstein-Uhlenbeck processes} 
\date{} 
\author[Hu]{Yaozhong Hu}
\address{Department of Mathematical and Statistical Sciences \\
 University of Alberta at Edmonton \\
Edmonton,  Canada, T6G 2G1}
\email{  yaozhong@ualberta.ca}
\author[Sharma]{Neha Sharma} 
 \address{Department of Mathematical and Statistical Sciences \\
 University of Alberta at Edmonton \\
Edmonton,  Canada, T6G 2G1}
\email{neha2@ualberta.ca}
\keywords{Double exponential compound Poisson process, double exponential Ornstein-Uhlenbeck process, discrete time observation, ergodic theorem, ergodic estimators, strong consistency, 
central limit theorem, exact simulation. 
\newline 
This work is  supported by an NSERC discover fund and a start-up fund of University of Alberta. }
\begin{document}

\maketitle
\begin{abstract} 
The  goal of this paper is to construct ergodic estimators for  the parameters in the  double exponential   Ornstein-Uhlenbeck   process,   observed at discrete time instants with time step size $h$. The existence and uniqueness,  the strong consistency and the asymptotic normality of the  estimators are obtained for arbitrarily  fixed  time step size  $h$. A simulation method of the double exponential Ornstein-Uhlenbeck process  is proposed  and some numerical simulations are performed to demonstrate the effectiveness of the proposed estimators. 
\end{abstract}

\section{Introduction}
Let $(\Omega, \mathcal{F}, \mathbb{P})$ be a probability space with a right continuous family of increasing  $\sigma$-algebras  $(\mathcal{F}_t, t\geq 0)$ satisfying the usual condition
(\cite{meyer}).  The expectation on this probability space is denoted by $\EE$. Motivated by the recent successful applications  to finance (e.g. \cite{cai, hachmann} and references therein), we shall study in this work the parametric estimation problem for the double exponential Ornstein-Uhlenbeck process. To introduce this process  let   $(Y_n,n\geq 1)$ be a sequence of  independent  real valued  random variables  with the following    probability density function
\begin{equation}
f_{Y} (x)=p\eta e^{-\eta x}I_{[x\geq 0]}+q\varphi e^{\varphi x}I_{[x< 0]}\,, \label{e.1.1} 
\end{equation}  
where the parameters $p $, $q $, $\eta $,  $\varphi $ are positive and $p+q=1$. Let ${N_t}$ be the Poisson process with rate $\lambda>0$,  independent of $\{Y_i,i=1,2,\ldots\}$.  
Then  $Z_t=\sum_{i=1}^{N_t}Y_i $ is called the double exponential    compound Poisson process,  which is a particular   L\'evy process. The stochastic calculus with respect to this process falls in the framework of   the stochastic calculus for general 
L\'evy processes.  For more details we refer to \cite{Sato}     whose results will be used       freely.  
 
 Let us consider the  following  double exponential  Ornstein-Ulenbeck 
 process given by the following Langevin equation driven by the double exponential compound Poisson process $Z_t$. 
\begin{equation}
    dX_t =-\theta X_t dt 
    +\sigma dZ_t \,, \quad  t\in[0,\infty),\hspace{5mm}X_0=x_0\,. \label{e.1.2}  
\end{equation}
Of course,  this equation is interpreted as its integral form: 
\begin{equation}
    X_t =x_0-\theta\int_0^t X_s ds   +\sigma Z_t \,. 
\end{equation}
 This    process $X_t$ depends on  the parameters     $\theta$, $\sigma$, $p$ (or  $q$), $\eta$,  $\la$,  and  $\varphi$.  We assume that the process  
  $\{X_t;t\geq 0\}$ can be observed at  discrete time instants $t_j=jh$,  where $h>0$ is some observation 
  time interval. We want to use the    observation data set $\{X_{t_j}; j=1,2,\ldots n\}$ to estimate  
 the parameters    $\theta$, $\sigma$, $p$,  $\eta$,  $\la$, and  $\varphi$. To construct such estimators, we shall use the ergodic theorem $\lim_{n\rightarrow\infty}
 \frac1n \sum_{j=1}^\infty f(X_{t_j})=\int_\RR f(x) \mu(dx)$, where $\mu$ is the limiting distribution of $x_t$.
 One may think  that with appropriate choices of different $f$ we may be able to find all the parameters. However, 
the limiting distribution depends on the parameters in such a way (e.g. \eqref{e.2.7}) that one cannot decouple them. For this reason and motivated by 
\cite{Hu}, we get involved the ergodic theorem of the form  $\lim_{n\rightarrow\infty}
 \frac1n \sum_{j=1}^\infty g(X_{t_j}, X_{t_{j+1}})=\int_\RR g(x,y) \nu(dx, dy)$, where $\nu(dx,dy)$ is the limiting  distribution of $(X_t, X_{t+h})$. After finding the  distribution of $\mu$ and $\nu$ we use the moment functions to obtain appropriate equations so that the ergodic estimators satisfy. 

An immediate problem after the obtention of the   equations to identify the parameters is the 
well-posedness of such system: the existence, local uniqueness and global uniqueness of the system. We shall also address this   elementary and challenging 
 problem assuming $\sigma=\lambda=1$.  We shall prove that when the sample size is sufficiently large we shall have the existence and uniqueness of a local solution. With a further treatment, we reduce the  problem of global uniqueness  of the system to  a problem of finding zero for a real valued function of one variable, where the mean value theorem can be used.
The strong consistency and asymptotic normality of our ergodic estimators are also given. 
 
 To validate our approach we propose   an  exact decomposition simulation algorithm for our double exponential Ornstein-Ulenbeck  process. This algorithm  allows us to write the distribution of $X_{t+h}$ given $X_t$ as a sum of deterministic function and a mixed compound Poisson process. After discussing the algorithm we simulate the data from $\eqref{e.1.1}$ assuming some given values  of $\theta$, $p$,  $\eta$,  and  $\varphi$. Then we apply the estimators to estimate these parameters. The numerical results show that our estimators converge fast to the true parameters.   
%

The paper is organized as follows. 
In Section 2, we give some    preliminaries and some  basic results for our double exponential 
Ornstein-Uhlenbeck  process. We also obtain the explicit form of  the  characteristic functions of 
 limiting distributions $\mu$ and $\nu$ mentioned earlier. In Section 3, we construct the ergodic estimators for all the parameters in the double exponential 
 Ornstein-Uhlenbeck process. The local existence, uniqueness and the global uniqueness of the system of equations determining these ergodic estimators are discussed. In Section 4, the joint asymptotic normality of the the estimators is obtained. In Section 5 , we discuss the exact decomposition algorithm for simulating the process. In Section 6 we perform some numerical simulations to validate our results which demonstrate the effectiveness of our estimators. 
Section 7 contains the computation of a covariance matrix appeared in our theorem. 

\section{Preliminaries}
The equation $\eqref{e.1.2}$ has a unique  solution 
given by 
\begin{equation}
    X_t =e^{-\theta t}x_0+\sigma \int_0^te^{-\theta (t-s)}dZ_s  \,.  \label{e.2.1}
\end{equation}
If  $\theta>0$, then  the double exponential Ornstein-Uhlenbeck   process $X_t$   converges in law to the  random variable $ \XX_o =\sigma \int_0^{\infty}e^{-\theta s}dZ_s$.  If the initial condition $X_0$ has the   law   of $  \XX_o$, namely, if the process starts at the stationary distribution 
and if $X_0$ is independent of the process $Z_t$,  then  $X_t$ is a stationary  process.
It is well-known from   \cite[Theorem 17.5]{Sato}) that   the double exponential process  $\{X_t,  t\ge 0\}$  is ergodic.
Namely, we have the following result from \cite[Theorem 8.1]{Meyn}.
  
\begin{proposition}\label{p.2.1} 
Let  $f:\mathbb{R}\rightarrow  \mathbb{R}$ be measurable such that $\EE|f(\XX_o)|<\infty$.
Then for any initial condition $x_0\in \RR$ and for any $h\in \RR_+$,  we have  (denoting $t_j=jh$) 
\begin{equation}
\lim_{n\rightarrow\infty}\frac1n \sum_{j=1}^nf(X_{t_j})=\EE(f(\XX_o))\hspace{10mm}a.s. \label{eq1}
\end{equation}
\end{proposition}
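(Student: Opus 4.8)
The plan is to realize the sampled sequence $(X_{t_j})_{j\ge 0}$ as a time-homogeneous discrete-time Markov chain and then invoke the strong law of large numbers for positive Harris recurrent chains cited as \cite[Theorem 8.1]{Meyn}. First I would note that, by the explicit representation \eqref{e.2.1} and the independent-increment property of the L\'evy process $Z$, the continuous process $(X_t)_{t\ge0}$ is a time-homogeneous Feller Markov process with semigroup $P_tf(x)=\EE[f(X_t)\mid X_0=x]$. Fixing the step $h>0$ and writing $\xi_j=X_{t_j}=X_{jh}$, the Markov property shows that $(\xi_j)_{j\ge0}$ is a Markov chain with one-step transition kernel $P_h(x,\cdot)$. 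Since the law $\mu=\mathcal L(\XX_o)$ satisfies $\mu P_t=\mu$ for every $t\ge0$ (its uniqueness and the ergodicity of the flow being the content of \cite[Theorem 17.5]{Sato}), it is in particular invariant for $P_h$, so $\mu$ is the candidate limiting measure and $\int_\RR f\,d\mu=\EE[f(\XX_o)]$.

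The core of the argument is to verify that the sampled chain $(\xi_j)$ is positive Harris recurrent with invariant probability $\mu$, which is exactly the hypothesis under which \cite[Theorem 8.1]{Meyn} delivers $\frac1n\sum_{j=1}^n f(\xi_j)\to\int_\RR f\,d\mu$ almost surely from every starting point $x_0$ and for every $f\in L^1(\mu)$. I would establish this in two steps. For $\psi$-irreducibility I would exploit that the jump density $f_Y$ in \eqref{e.1.1} is strictly positive on all of $\RR$: conditioning on exactly one jump of the Poisson process $N$ in $[0,h]$, the resulting one-jump transition has an absolutely continuous component whose density is positive on an interval around $e^{-\theta h}x$ and can be arranged to charge any prescribed target set, so every set of positive Lebesgue measure is reached from every $x$ with positive probability. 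Combining this minorization on compacta with a Foster--Lyapunov drift condition — using the mean reversion $\theta>0$ together with $\EE|Y_1|<\infty$, the test function $V(x)=1+|x|$ satisfies $P_hV(x)\le e^{-\theta h}V(x)+b$ for some constant $b$, a geometric drift toward the compact sublevel sets of $V$ — yields positive Harris recurrence (in fact geometric ergodicity) of the sampled chain.

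With positive Harris recurrence in hand, \cite[Theorem 8.1]{Meyn} applies directly and gives \eqref{eq1} for arbitrary $x_0$ and every $f$ with $\EE|f(\XX_o)|<\infty$. One point worth recording is why the conclusion holds for \emph{every} $h\in\RR_+$: the drift-plus-minorization argument shows the continuous process is exponentially ergodic, hence mixing, so the time-$h$ skeleton inherits aperiodicity and irreducibility for each fixed $h$ and no arithmetic restriction on $h$ arises. Alternatively, on the stationary path space one may deduce ergodicity of the measure-preserving shift $S_h$ from mixing of the flow and read \eqref{eq1} off Birkhoff's theorem for the stationary version, extending to general $x_0$ by the coupling that Harris recurrence provides.

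The step I expect to be the main obstacle is the minorization (small-set) condition for the sampled chain, precisely because the driving noise is a pure-jump compound Poisson process: on the event that no jump occurs in $[0,h]$ the transition is the singular deterministic map $x\mapsto e^{-\theta h}x$, so $P_h(x,\cdot)$ carries a singular part and one cannot appeal to a globally smooth transition density. The delicate point is to extract from the one-jump event a genuinely absolutely continuous component whose density is bounded below on compact sets, which is what makes compacta small; once that is secured, the Lyapunov inequality and the identification of the limit are routine.
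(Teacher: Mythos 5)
Your proposal is correct and follows essentially the same route as the paper, which simply invokes the ergodicity of the L\'evy-driven Ornstein--Uhlenbeck process (via \cite[Theorem 17.5]{Sato}) and then cites \cite[Theorem 8.1]{Meyn} for the law of large numbers of the sampled chain, offering no further proof. Your additional verification of the hypotheses --- $\psi$-irreducibility of the $h$-skeleton from the everywhere-positive one-jump density, the small-set minorization on compacta, and the Foster--Lyapunov drift with $V(x)=1+|x|$ using $\theta>0$ and $\EE|Y_1|<\infty$ --- is sound and in fact supplies the details the paper leaves implicit.
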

The explicit form  of the 
distribution of $\XX_o$ is hard to obtain.
So,  it is hard to  compute $\EE(f(\XX_o))$ for general $f$.  But when  $f$ has  some particular form, namely, when $f(x)=e^{\iota \xi x}$,  then the 
computation of 
  $\EE(f(\XX_o))$  is much  simplified. 
\subsubsection{Evaluation of the limiting characteristic functions}
\begin{prop}\label{p.2.2}
Let   $\{Z_t\}$ be the double exponential compound Poisson process 
and let $0<s<t<\infty$. Then 
for any real valued continuous function $g(u)$ on $[s,t]$    we have 
\begin{equation}
\EE\Big[\exp{\Big(i z\int_s^tg(u)dZ_u(\omega)\Big)}\Big]=\exp{\Big[ \int_s^t\Psi(g(u)z)du\Big]}\,, \quad \forall \ 
z\in \RR \,, \label{e.2.3} 
\end{equation}
where
\begin{equation}
\Psi(z)= \log \hat P_{Z_1}(u) = \log \EE \Big[e^{iuZ_1}\Big]=   \lambda   \int_\RR  e^{iuy} f_{Y} ( y)dy
    -\la      \label{e.2.4}
\end{equation}  
with $f_Y$ being given by \eqref{e.1.1}.  
\end{prop}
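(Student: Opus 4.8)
The plan is to reduce the computation to the independence of the increments of the compound Poisson process $Z$ together with the known characteristic function of a single increment, and then to pass from Riemann--Stieltjes approximating sums to the integral by continuity.

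First I would record the characteristic function of one increment. Since $Z$ is a L\'evy process with $Z_t=\sum_{i=1}^{N_t}Y_i$ and stationary independent increments, the increment $Z_b-Z_a$ is itself compound Poisson with rate $\lambda(b-a)$; conditioning on the number of jumps gives, for $0\le a<b$,
\[
\EE\!\left[e^{iu(Z_b-Z_a)}\right]=\exp\!\Big[(b-a)\,\lambda\Big(\int_\RR e^{iuy}f_Y(y)\,dy-1\Big)\Big]=\exp\!\big[(b-a)\Psi(u)\big],
\]
which is exactly the $\Psi$ of \eqref{e.2.4}; taking $a=0$, $b=1$ also verifies $\EE[e^{iuZ_1}]=e^{\Psi(u)}$.

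Next I would discretize. Fix a partition $s=u_0<u_1<\cdots<u_m=t$ with mesh $\|\pi\|=\max_k(u_{k+1}-u_k)$ and form the approximating sum
\[
S_\pi=\sum_{k=0}^{m-1}g(u_k)\,(Z_{u_{k+1}}-Z_{u_k}).
\]
Because the compound Poisson process has c\`adl\`ag paths of finite variation (indeed piecewise constant with only finitely many jumps on $[s,t]$ almost surely) and $g$ is continuous, $S_\pi\to\int_s^t g(u)\,dZ_u$ almost surely as $\|\pi\|\to0$. Using the independence of the increments of $Z$ over the disjoint intervals $[u_k,u_{k+1}]$ together with the single-increment formula above,
\[
\EE\!\left[e^{izS_\pi}\right]=\prod_{k=0}^{m-1}\EE\!\left[e^{izg(u_k)(Z_{u_{k+1}}-Z_{u_k})}\right]=\exp\!\Big[\sum_{k=0}^{m-1}(u_{k+1}-u_k)\,\Psi\big(zg(u_k)\big)\Big].
\]

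Finally I would pass to the limit $\|\pi\|\to0$ on both sides. On the left, $e^{izS_\pi}\to\exp\!\big(iz\int_s^t g\,dZ_u\big)$ almost surely and $|e^{izS_\pi}|=1$, so bounded convergence gives convergence of the expectations to $\EE[\exp(iz\int_s^t g\,dZ_u)]$. On the right, the exponent is a Riemann sum for $\int_s^t\Psi(zg(u))\,du$; since $g$ is continuous and $z\mapsto\Psi(z)$ is continuous, $u\mapsto\Psi(zg(u))$ is continuous on $[s,t]$, hence Riemann integrable, and the sums converge to $\int_s^t\Psi(zg(u))\,du$. Equating the two limits yields \eqref{e.2.3}. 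The only point requiring care is the interchange of limit and expectation, which is immediate here because the integrands are bounded in modulus by $1$; the finite-variation structure of the compound Poisson paths makes the pathwise convergence $S_\pi\to\int_s^t g\,dZ_u$ elementary rather than a genuine stochastic-calculus limit.
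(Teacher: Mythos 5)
Your proposal is correct, and it is in fact more complete than the paper's own argument. The paper's proof only carries out the first step of your plan: it computes $\hat P_{Z_t}(u)=\EE[e^{iuZ_t}]$ by conditioning on the Poisson count $N_t$ (exactly your single-increment calculation, specialized to $a=0$), obtains \eqref{e.2.4} at $t=1$, and then simply refers to \cite[Section 17]{Sato} for the passage from the one-dimensional characteristic function to the integral formula \eqref{e.2.3}. You supply that missing passage explicitly: the Riemann--Stieltjes approximation $S_\pi$, the factorization of $\EE[e^{izS_\pi}]$ over disjoint increments, and the two limits (bounded convergence on the left, a Riemann sum of the continuous function $u\mapsto\Psi(zg(u))$ on the right). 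All of these steps are sound; the pathwise convergence $S_\pi\to\int_s^t g\,dZ_u$ is indeed elementary here because the compound Poisson path has only finitely many jumps on $[s,t]$ almost surely and $g$ is continuous, so the approximating sum just relocates each jump's weight from $g(\tau_i)$ to $g(u_{k(i)})$ with $u_{k(i)}\to\tau_i$. The only thing to note is that what the paper delegates to Sato you prove from scratch, which makes your write-up self-contained at the cost of a little extra length; the core computation (conditioning on the number of jumps to get $\exp[(b-a)\Psi(u)]$) is identical in both.
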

\begin{proof} 
We  follow  the idea of  \cite[Section 17]{Sato}. 
Let us first compute the characteristic function of $Z_t$. 
\begin{equation*}
\begin{split}
 \hat P_{Z_t}(u) := &  \EE \Big[e^{iuZ_t}\Big] =\EE\Big[e^{iu\sum_{j=1}^{N_t}Y_j}\Big]\\
     =&\sum_{n=0}^{\infty} E\Big[e^{iu\sum_{j=1}^{n}Y_j}|N_t=n\Big] P(N_t=n)\\
    &=\sum_{n=0}^{\infty}\frac{(\lambda \cdot t )^n}{n!}e^{-\lambda}\Big(\EE(e^{iuY_1})\Big)^n\\
     =&e^{[\lambda t \EE(e^{iuY_1})-\la ]}=
    \exp \left[  \lambda t \int_\RR  e^{iuy} f_{Y} ( y)dy
    -\la \right]\,,
\end{split}
\end{equation*}
where $f_Y(y)$ is the double exponential density  defined by \eqref{e.1.1}. 
When $t=1$   we have \eqref{e.2.4}. 
\end{proof}
Now we are going to compute the characteristic function of the limiting distribution of $\XX_o$. From   Equation \eqref{e.2.1} and 
Proposition \ref{p.2.2}  it follows 
\begin{equation}
\begin{split}
    \EE[e^{i uX_t}]=
    &\exp{\Big[ie^{-\theta t} x_0u+\int_0^t\Psi(\sigma e^{-\theta s}u)ds\Big]}\\
    =& \exp{\Bigg[ie^{-\theta t} x_0u+
    \la \int_0^t \left[\int_\RR  e^{i \sigma e^{-\theta s}uy} f_{Y} ( y)dy-1 \right] 
     ds\Bigg]}\\
     =& \exp{\Bigg[ie^{-\theta t} x_0u+
    \la I_{1,t}\Bigg]}  \,, 
     \end{split} \label{e.2.5} 
\end{equation}
where $ I_{1,t}=\int_0^t [I_{2,s}-1]ds$ and $I_{2,s}$ is defined and computed as follows. 
\begin{eqnarray*}
I_{2,s}&=& \int_\RR  e^{i \sigma e^{-\theta s}uy} f_{Y} ( y)dy\\
&=& \int_\RR  e^{i \sigma e^{-\theta s}uy} 
\left[p\eta e^{-\eta y}I_{[y\geq 0]}+q\varphi e^{\varphi y}I_{[y< 0]} \right] dy\\
&=& p \eta \int_0^\infty  e^{i \sigma e^{-\theta s}uy} 
  e^{-\eta y} dy+q\varphi \int_{-\infty}^0 
 e^{i \sigma e^{-\theta s}uy}   e^{\varphi y}    dy\\
 &=& \frac{p\eta}{\eta-i\sigma u e^{-\theta s }}+
 \frac{q\varphi }{\varphi +i\sigma u e^{-\theta s }}\,, 
\end{eqnarray*}
where in the above second identity we used the explicit 
form of $f_Y$ given by \eqref{e.1.1}.  
 
Thus 
\begin{align}
I_{1,t}= &  \int_0^t[I_{2,s}-1]ds 
\nonumber \\
     = & \int_0^t \Big(\frac{p\eta}{\eta-i\sigma e^{-\theta s}u}+\frac{q\varphi}{\varphi+i\sigma e^{-\theta s}u}-1\Big)ds \nonumber \\
     = &\frac{p }{\theta}\ln\Big(\frac{\eta-i\sigma e^{-\theta t} u}{\eta-i\sigma u}\frac{1}{ e^{-\theta t}}\Big)+\frac{q }{\theta}\ln\Big(\frac{\varphi+i e^{-\theta t}\sigma u}{\varphi+i\sigma u}\frac{1}{ e^{-\theta t}}\Big)-  t  
  \nonumber   \\     
    =&  \ln\left[ \Big(\frac{\eta-i\sigma e^{-\theta t} u}{\eta-i\sigma u}\frac{1}{ e^{-\theta t}}\Big)^{\frac{p }{\theta}}
   \cdot \Big(\frac{\varphi+i e^{-\theta t}\sigma u}{\varphi+i\sigma u}\frac{1}{ e^{-\theta t}}\Big)^{\frac{q }{\theta}}
   \cdot  e^{ - t}  \right]\nonumber \\
   =&  \ln\left[ \Big(\frac{\eta-i\sigma e^{-\theta t} u}{\eta-i\sigma u}\ \Big)^{\frac{p }{\theta}}
   \cdot \Big(\frac{\varphi+i e^{-\theta t}\sigma u}{\varphi+i\sigma u} \Big)^{\frac{q }{\theta}}
      \right] \,, \label{e.2.6} 
\end{align}
where in the above last identity, we used $p+q=1$.  
Consequently,  we 
have as $t\rightarrow \infty$  
\begin{equation*}
 \lim_{t\to \infty} I_{1,t}= \ln\left[ \Big(\frac{\eta }{\eta-i\sigma u}\ \Big)^{\frac{p }{\theta}}
   \cdot \Big(\frac{\varphi }{\varphi+i\sigma u} \Big)^{\frac{q }{\theta}}
      \right] \,. 
 \end{equation*}  
 This combined with \eqref{e.2.5} yields   
\begin{align}
\lim_{t\rightarrow\infty}\EE[e^{i\langle u,X_t\rangle}]
&=\Big(\frac{\eta}{\eta-i\sigma u}\Big)^{\frac{p\lambda}{\theta}}\Big(\frac{\varphi}{\varphi+i\sigma u}\Big)^{\frac{q\lambda}{\theta}}\,. \label{e.2.7}
\end{align}
In other words, we have
\begin{eqnarray}
\EE  \left[ e^{i\langle u,\XX_o\rangle}\right] 
=\lim_{t\rightarrow\infty}\EE[e^{i\langle u,X_t\rangle}]=\Big(\frac{1}{1-i  u\frac{\sigma}{\eta}}\Big)^{\frac{p\lambda}{\theta}}\Big(\frac{1}{1+i  u\frac{\varphi}{\eta} }\Big)^{\frac{q\lambda}{\theta}} 
 \,. \label{e.2.8} 
\end{eqnarray}  
The probability distribution function of $\XX_o$ is   uniquely determined by the above characteristic function 
\eqref{e.2.8}.  This formula also means that the invariant random variable
$\XX_o$ depends  on $\frac{\sigma}{\eta}, \frac{\varphi}{\eta}, \frac{\la}{\th}$  and then we cannot  separate the parameters $\theta$, $\sigma$,  $\eta$, $\varphi$,  $\lambda$ and  $p$.
 
Motivated by the works of \cite{Hu, mehdi} we use the multi-time ergodic theorem to find more parameters.
Our theoretical basis is the following
general ergodic result, which is a consequence of      \cite[Theorem 1.1]{1961}. 
\begin{equation}
\lim_{n\rightarrow\infty}\frac{1}{n}\sum_{j=1}^ng(X_{t_j}, X_{t_{j}+h} )=\EE\left[g\Big(\XX_0, \XX_h\Big)\right]   \label{e.2.9}
\end{equation} 
 where $\XX_t$ satisfies the Langevin equation $\eqref{e.1.2}$ with
the initial condition  $\XX_0=\XX_o$, namely, $d\XX_t = -\theta \XX_tdt   +\sigma dZ_t$ and $\XX_0$ has  the invariant measure given by \eqref{e.2.8}. The right hand side of \eqref{e.2.9} is hard to compute for  general $g$. So we shall compute 
\begin{equation}
\lim_{n\rightarrow\infty}\frac{1}{n}\sum_{j=1}^n\exp{[iuX_{t_j}+ivX_{t_{j}+h}]}=\EE\left[\exp \Big( iu\XX_0+iv\XX_h\Big)\right]   \label{eq2}
\end{equation} 
for arbitrary $u, v\in\mathbb{R}$.  
In fact, we shall evaluate  the above quantity by evaluating    $\lim_{t\rightarrow\infty} \EE[e^{i( uX_t+vX_{t+h})}]$.  We shall still use the formula \eqref{e.2.3} to do our computations. As we see we can assume $X_0=0$.  Thus,  
\begin{eqnarray*}
&&X_t(\omega)=\sigma \int_0^te^{-\theta (t-s)}dZ_s(\omega) \,;\quad X_{t+h}(\omega)
=\sigma \int_0^{t+h}e^{-\theta (t+h-s)}dZ_s(\omega)\,.
\end{eqnarray*} 
Therefore,
\begin{align}
    uX_t(\omega)+vX_{t+h}(\omega)
    =&\sigma \int_0^t(ue^{-\theta (t-s)}+ve^{-\theta (t+h-s)})dZ_s\nonumber \\
   &\qquad +\sigma\int_t^{t+h}ve^{-\theta (t+h-s)}dZ_s\,. 
\end{align}
Because of the independent increment property of the double exponential compound Poisson process $Z_t$, we have 
\begin{align}
  \EE\Big[\exp\Big(iuX_{t}+ivX_{t+h}\Big)\Big]   =
  &\EE\Big[\exp\Big(i\int_0^t\sigma e^{-\theta (t-s)}(u+ve^{-\theta h})dZ_s\Big)\Big]\nonumber\\
  &\qquad  \EE\Big[i\int_t^{t+h}\sigma ve^{-\theta (t+h-s)}dZ_s\Big]\nonumber\\
  =:& I_{3, t}\cdot I_{4,t}\,,  
\end{align}
where $I_{3, t}$ and $ I_{4,t}$ denote the above first and second expectations. 
Similar to \eqref{e.2.6}, we have 
\begin{align}
   I_{3,t} = 
   &\exp\Big[\frac{p\lambda}{\theta}\ln\Big(\frac{\eta-i\sigma( e^{-\theta t} u+ve^{-\theta (t+h)})}{\eta-i\sigma (u+ve^{-\theta h})}\frac{u+ve^{-\theta h}}{ ue^{-\theta t}+ve^{-\theta (t+h)}}\Big)\nonumber \\
  & \qquad\quad   +\frac{q\lambda}{\theta}\ln\Big(\frac{\varphi+i\sigma (e^{-\theta t} u+ve^{-\theta (t+h)})}{\varphi+i\sigma (u+ve^{-\theta h})}\nonumber \\
  &\qquad \qquad\qquad  \cdot \frac{u+ve^{-\theta h}}{ e^{-\theta t} u+ve^{-\theta (t+h)})}\Big)-\ln(e^{\lambda t})\Big]
\end{align} 
and 
\begin{align}
  I_{4,t}   &=\Big(\frac{\eta-i\sigma v}{\eta-i\sigma e^{-\theta h} v}\Big)^{\frac{p\lambda}{\theta}}\Big(\frac{\varphi+i e^{-\theta h}\sigma v}{\varphi+i\sigma v}\Big)^{\frac{q\lambda}{\theta}}\,. 
\end{align}
It may be a bit strange to see that $I_{4,t}$ is independent of $t$. But this is because of the
independent increment property of the process $Z_t$.
In fact, we see easily that $\int_t^{t+h}\sigma ve^{-\theta (t+h-s)}dZ_s$ has the same law as that of  $\int_0^{ h}\sigma ve^{-\theta (h-s)}dZ_s$.  
It is easy to verify 
\begin{equation}
\begin{split}
 \lim_{t\to\infty} 
 I_{3,t}  & =\Big(\frac{\eta}{\eta-i\sigma (u+ve^{-\theta h})}\Big)^{\frac{p\lambda}{\theta}}
\Big(\frac{\varphi}{\varphi+i\sigma (u+ve^{-\theta h})}\Big)^{\frac{q\lambda}{\theta}} \,.  
\end{split}
\end{equation}
Hence, we have 
\begin{equation}
\begin{split}
\EE\left[ \exp\Big(iu\XX_0+iv\XX_h\Big)\right]
=&   \lim_{t\rightarrow\infty}\EE\Big[\exp\Big(iuX_{t}+ivX_{t+h}\Big)\Big]  \\
  =&\Big(\frac{\eta}{\eta-i\sigma (u+ve^{-\theta h})}\Big)^{\frac{p\lambda}{\theta}}
\Big(\frac{\varphi}{\varphi+i\sigma (u+ve^{-\theta h})}\Big)^{\frac{q\lambda}{\theta}}   \\
&\qquad \cdot\Big(\frac{\eta-i\sigma e^{-\theta h} v}{\eta-i\sigma v}\Big)^{\frac{p\lambda}{\theta}}\Big(\frac{\varphi+i e^{-\theta h}\sigma v}{\varphi+i\sigma v}\Big)^{\frac{q\lambda}{\theta}}\,. 
\label{e.2.16}
\end{split}
\end{equation}
We summarize \eqref{eq1}, \eqref{e.2.8},
\eqref{eq2}, \eqref{e.2.16} as the following theorem.
\begin{theorem} \label{t.2.3} 
Let $X_t$  be the double exponential 
Ornstein-Uhlenbeck process with  initial condition $x_0\in \RR$.  Then for any $h\in \RR_+, u,v\in \RR$,  we have  almost surely   (denoting $t_j=jh$) 
\begin{equation}
\left\{
\begin{split}
 &\lim_{n\rightarrow\infty}\frac{1}{n}\sum_{j=1}^n e^{iu X_{t_j}} 
= \Big(\frac{\eta }{\eta-i  u \sigma }\Big)^{\frac{p\lambda}{\theta}}\Big(\frac{\eta }{\eta+i  u \varphi }\Big)^{\frac{q\lambda}{\theta}}   \\ 
& \lim_{n\rightarrow\infty}\frac{1}{n}\sum_{j=1}^n\exp{[iuX_{t_j}+ivX_{t_{j}+h}]} \\ 
 &\qquad\qquad  \  =\Big(\frac{\eta}{\eta-i\sigma (u+ve^{-\theta h})}\Big)^{\frac{p\lambda}{\theta}}
\Big(\frac{\varphi}{\varphi+i\sigma (u+ve^{-\theta h})}\Big)^{\frac{q\lambda}{\theta}}    \\
 &\qquad \qquad \qquad\qquad    \cdot\Big(\frac{\eta-i\sigma e^{-\theta h} v}{\eta-i\sigma v}\Big)^{\frac{p\lambda}{\theta}}\Big(\frac{\varphi+i e^{-\theta h}\sigma v}{\varphi+i\sigma v}\Big)^{\frac{q\lambda}{\theta}}\,. 
\end{split}  \right.   \label{e.2.17} 
\end{equation} 
\end{theorem}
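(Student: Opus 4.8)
The plan is to recognize Theorem \ref{t.2.3} as a packaging of the two ergodic limits already prepared in the preliminaries, namely the single-time convergence of Proposition \ref{p.2.1} and the two-time convergence \eqref{e.2.9}, specialized to complex exponential test functions, together with the explicit characteristic functions \eqref{e.2.8} and \eqref{e.2.16}. Thus no new estimate is required: the task is to legitimately feed $f(x)=e^{iux}$ and $g(x,y)=e^{iux+ivy}$ into results stated for real-valued integrands, and then to substitute the closed forms already computed.

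For the first identity I would apply Proposition \ref{p.2.1}. Since that proposition is phrased for a real measurable $f$ with $\EE|f(\XX_o)|<\infty$, I would first write $e^{iux}=\cos(ux)+i\sin(ux)$ and apply \eqref{eq1} separately to the bounded measurable functions $x\mapsto\cos(ux)$ and $x\mapsto\sin(ux)$; boundedness makes the integrability hypothesis automatic. Adding the two almost sure limits and using linearity of the Cesàro average gives
\begin{equation*}
\lim_{n\to\infty}\frac1n\sum_{j=1}^n e^{iuX_{t_j}}=\EE\big[e^{iu\XX_o}\big]\qquad \text{a.s.},
\end{equation*}
and the right-hand side is exactly the closed form recorded in \eqref{e.2.8}, which yields the first line of \eqref{e.2.17}.

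For the second identity the structure is identical, now using the two-time ergodic theorem \eqref{e.2.9} in place of Proposition \ref{p.2.1}. Splitting $g(x,y)=e^{iux+ivy}$ into its real and imaginary parts (each bounded and continuous, hence admissible for the ergodic theorem built on \cite[Theorem 1.1]{1961}), applying \eqref{e.2.9} to each and recombining produces
\begin{equation*}
\lim_{n\to\infty}\frac1n\sum_{j=1}^n e^{iuX_{t_j}+ivX_{t_j+h}}=\EE\big[e^{iu\XX_0+iv\XX_h}\big]\qquad \text{a.s.},
\end{equation*}
whose right-hand side is precisely \eqref{e.2.16}, giving the second line of \eqref{e.2.17}.

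Because the genuinely computational work—the evaluation of the limiting characteristic functions—has already been carried out, the only real point to verify is that the two ergodic theorems may indeed be invoked for these test functions. The anticipated obstacle is therefore not a calculation but a consistency check: one must confirm that the stationary expectations $\EE[f(\XX_o)]$ and $\EE[g(\XX_0,\XX_h)]$ produced by the ergodic averages coincide with the $t\to\infty$ limits of $\EE[e^{iuX_t}]$ and $\EE[e^{iuX_t+ivX_{t+h}}]$ used to derive \eqref{e.2.8} and \eqref{e.2.16}. This follows from the convergence in law of $X_t$ to $\XX_o$ and of $(X_t,X_{t+h})$ to $(\XX_0,\XX_h)$ noted after \eqref{e.2.1}, which identifies the invariant law with the limiting one; the decomposition into real and imaginary parts then transfers the scalar ergodic statements to the complex exponentials without further difficulty.
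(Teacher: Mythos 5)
Your proposal is correct and matches the paper's own treatment: the paper explicitly presents Theorem \ref{t.2.3} as a summary of \eqref{eq1}, \eqref{e.2.8}, \eqref{eq2} and \eqref{e.2.16}, i.e.\ the one- and two-time ergodic theorems applied to the exponential test functions together with the limiting characteristic functions already computed. Your extra care in splitting $e^{iux}$ and $e^{iux+ivy}$ into real and imaginary parts to fit the real-valued ergodic statements, and in identifying the stationary law with the $t\to\infty$ limit, only makes explicit steps the paper leaves implicit.
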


\section{Estimation of the parameters $\eta$, $\theta$, $\varphi$ and $p$}
Assume now that the double exponential 
Ornstein-Uhlenbeck process can be observed at discrete time so that the observation data $\{X_{t_j}, j=1, \cdots, n\}$
are available  to us,  where $t_j=j h$
for some given observation  time interval length $h$. 
Presumably Theorem \ref{t.2.3} can be used to estimate all  the parameters $\eta$, $\theta$, $\varphi$, $\la$, $\sigma$,  and $p$ by  replacing the limits in 
\eqref{e.2.17}  by their   the empirical characteristic functions $\hat{\Psi}_n(u)$  and $\hat{\Theta}_n(u,v)$ defined as  follows 
\begin{equation}
 \left\{
 \begin{split} 
&  \hat{\Psi}_{1,n}(u, v):=\frac{1}{n}\sum_{j=1}^n\exp{iuX_{t_j}}\,;  \\
&\hat{\Psi}_{2,n}(u,v)=\frac{1}{n}\sum_{j=1}^n\exp({iuX_{t_j}+ivX_{t_j+h}} )\,. 
\end{split}\right. 
\end{equation}
For any given pair $(u, v)$  although $\hat{\Psi}_{1,n}(u, v)$ depends only on $u$
we write it as a function of $u,v$ for convenience.
Since we have $6$ parameters, it may be possible for us to choose appropriately $6$ pairs of $(u_k, v_k)$ such that the $6$ parameters can be determined by 
\begin{equation}
\left\{
\begin{split}
 &  \Big(\frac{\eta }{\eta-i  u_k\sigma }\Big)^{\frac{p\lambda}{\theta}}\Big(\frac{\eta }{\eta+i  u_k \varphi }\Big)^{\frac{q\lambda}{\theta}}
  =\hat{\Psi}_{1,n}(u_k, v_k) \,, \quad k=1, \cdots, m,\\ 
&    \Big(\frac{\eta}{\eta-i\sigma (u_k+v_ke^{-\theta h})}\Big)^{\frac{p\lambda}{\theta}}
\Big(\frac{\varphi}{\varphi+i\sigma (u_k+v_ke^{-\theta h})}\Big)^{\frac{q\lambda}{\theta}}     \\
 &\qquad \qquad     \cdot\Big(\frac{\eta-i\sigma e^{-\theta h} v_k}{\eta-i\sigma v_k}\Big)^{\frac{p\lambda}{\theta}}\Big(\frac{\varphi+i e^{-\theta h}\sigma v_k}{\varphi+i\sigma v_k}\Big)^{\frac{q\lambda}{\theta}}=\hat{\Psi}_{2,n}(u_k, v_k)\,, \\
 &\qquad\qquad\qquad \quad k=m+1, \cdots, 6 \,, 
\end{split}  \right.   \label{e.3.2} 
\end{equation} 
where $m$ is some integer between $1$ and $6$. 
For any given pair $(u, v)$,
the empirical   characteristic  
functions  $\hat{\Psi}_{1,n}(u, v)$
and   $\hat{\Psi}_{1,n}(u, v)$  are known since we have the available observation data.  
Thus \eqref{e.3.2} is a 
system of function equations on the parameters 
$\eta$, $\theta$, $\varphi$, $\la$, $\sigma$,  and $p$. 
We believe that with appropriate choice of $(u_k, v_k)$ we should be able to use \eqref{e.3.2} to estimate all the above  six parameters. However, it is still difficult for us 
to argue if this  system of equations have a global unique solution or not although this  system  of nonlinear function equations \eqref{e.3.2}  is explicit and appears to be quite simple as well. Since we want to deal with the global uniqueness of the system \eqref{e.3.2}, we shall assume 
$\la=\sigma=1$ so that we now have only four parameters: $\eta$, $\theta$, $\varphi$,  and $p$. 
Supposedly we should be able to choose four different values of $(u_k, v_k)$ so that we
obtain a system of four equations  for the four unknowns. However, it is still difficult to argue the global uniqueness for the obtained system. 
So we are proposing an alternative method. 
Since \eqref{e.2.17} holds true for all $(u, v)\in \RR$ we can obtain explicit formulas  for the moments and then we use the moments to identity the parameters.  
Since $\EE|\XX_0|^m<\infty$ and  $\EE|\XX_0\XX_h|^m<\infty$ for all $m$      we know 
that \eqref{eq1}       and \eqref{e.2.9}  hold true for moment functions, in particular, we shall choose $f=x, x^2, x^3, g(x,y)=xy$.   
Thus the system of four equations we choose to obtain the  
estimators for  $\eta$, $\theta$, $\varphi$,  and $p$ are  
\begin{equation}
\left\{ \begin{split}
& \EE[\XX_o]=\mu_{1,n}, \quad\hbox{where}\  \mu_{1,n}:= \frac{1}{n}\sum_{j=1}^nX_{t_j} \,,\\
 &   \EE[\XX_o^2]=\mu_{2,n}, \quad\hbox{where}\ \mu_{2,n}:= \frac{1}{n}\sum_{j=1}^nX_{t_j}^2 \,,  \\ 
&    \EE[\XX_o^3]=\mu_{3,n}, \quad\hbox{where}\ \mu_{3,n}:=\frac{1}{n}\sum_{j=1}^nX_{t_j}^3 \,,\\
&   \EE[\XX_o\XX_h]= \mu_{4,n}, \quad\hbox{where}\ \mu_{4,n}:= \frac{1}{n}\sum_{j=1}^nX_{t_j}X_{t_j+h} \,. 
\end{split} \right.  \label{e.3.4} 
\end{equation}  
The right hand sides of \eqref{e.3.4} (namely, $\mu_{i, n}, i=1, 2, 3, 4$)  
are known from the discrete time observations of the double exponential 
Ornstein-Uhlenbeck process $X_t$. The left hand sides of \eqref{e.3.4} are functions of the parameters $\eta$, $\theta$, $\varphi$,  and $p$. We need first to find out    how they depend on the  four  parameters explicitly and then solve this system to construct the ergodic estimators 
$\hat \eta_n$, $\hat\theta_n$, $\hat \varphi_n$,  and $\hat p_n$for the parameters.  Let us also emphasize that \eqref{e.3.4} are not equations for the true parameters but they are equations for the ergodic estimators. 

Now let us find the explicit forms for the left hand sides of \eqref{e.3.2}. Let 
$
\rho=\frac{\sigma}{\eta}\quad \hbox{and}\quad \xi=\frac{\sigma}{\varphi}\,. 
$\  
From the identities  
\eqref{e.2.8} and \eqref{e.2.16},
we see   by the expression of moments through characteristic function (e.g. Corollary 1 to Theorem 2.3.1 in \cite{Eugene}) 
\begin{equation}
\left\{ \begin{split}
 \EE[\XX_o]&=\frac{1}{i}\frac{\partial }{\partial u}\EE  [ e^{i\langle u,\XX_o\rangle}]\bigg|_{u=0}\\ 
&=\frac{1}{i}\frac{\partial }{\partial u}\Big(\frac{1 }{1-i  u \rho }\Big)^{\frac{p\lambda}{\theta}}\Big(\frac{1 }{1+i  u \xi }\Big)^{\frac{q\lambda}{\theta}}\bigg|_{u=0} \\ 
&=\frac{\lambda}{\theta}\Big[p\rho-q\xi\Big]\,; \\
\EE[\XX_o^2]&= \frac{1}{i^2}\frac{\partial^2 }{\partial u^2}\EE  [ e^{i\langle u,\XX_o\rangle}]\bigg|_{u=0}\\   
&=\frac{\lambda}{\theta}\Big[p\rho^2+q\xi^2\Big]+\frac{\lambda}{\theta}\Big[p\rho-q\xi\Big]^2\\
&=\frac{\lambda}{\theta}\Big[p\rho^2+q\xi^2\Big]+\EE[\XX_o]^2\,;\\
\EE[\XX_o^3]&= \frac{1}{i^3}\frac{\partial^3 }{\partial u^3}\EE  [ e^{i\langle u,\XX_o\rangle}]\bigg|_{u=0}\\
&=\frac{2\lambda}{\theta}\Big[p\rho^3-q\xi^3\Big]+\Big(\frac{\lambda}{\theta}\Big[p\rho^2+q\xi^2\Big]+\frac{\lambda}{\theta}\Big[p\rho-q\xi\Big]^2\Big)\Big(\frac{\lambda}{\theta}\Big[p\rho-q\xi\Big]\Big)\\
&+2\frac{\lambda}{\theta}\Big[p\rho-q\xi\Big]\Big(\frac{\lambda}{\theta}\Big[p\rho^2+q\xi^2\Big]\Big)\\
&=\frac{2\lambda}{\theta}\Big[p\rho^3-q\xi^3\Big]+\EE[\XX_o^2]\EE[\XX_o]+2\EE[\XX_o](\EE[\XX_o^2]-\EE[\XX_o]^2)\,;\\
\EE[\XX_o\XX_h]&=   \frac{1}{i^2}\frac{\partial }{\partial v}\frac{\partial }{\partial u}\EE\left[ \exp\Big(iu\XX_0+iv\XX_h\Big)\right]\bigg|_{u=0,v=0}\\
&=\frac{1}{i^2}\frac{\partial }{\partial v}\frac{\partial }{\partial u}\Big(\frac{1}{1-i\rho (u+ve^{-\theta h})}\Big)^{\frac{p\lambda}{\theta}}
\Big(\frac{1}{1+i\xi (u+ve^{-\theta h})}\Big)^{\frac{q\lambda}{\theta}}    \\
 &\qquad\qquad\qquad    \cdot\Big(\frac{1-i\rho e^{-\theta h} v}{1-i\rho v}\Big)^{\frac{p\lambda}{\theta}}\Big(\frac{1+i e^{-\theta h}\xi v}{1+i\xi v}\Big)^{\frac{q\lambda}{\theta}}\bigg|_{u=0,v=0}\\
 &=e^{-\theta h}\frac{\lambda}{\theta}\Big[p\rho^2+q\xi^2\Big]+\frac{\lambda^2}{\theta^2}\Big[p\rho-q\xi\Big]^2\\
 &=e^{-\theta h}\frac{\lambda}{\theta}\Big[p\rho^2+q\xi^2\Big]+\EE[\XX_o]^2
\end{split} \right. \,.  \label{e.3.5} 
\end{equation} 
An elementary simplification yields  (noticing $\la=1$)
\begin{align}
&\frac{1}{\theta}\Big[p\rho-q\xi\Big]
     =\mu_{1,n}\,, \label{e.3.6}\\
&\frac{1}{\theta}\Big[p\rho^2+q\xi^2\Big]=
     \mu_{2n} -\mu_{1,n}^2 \,,  \label{e.3.7}\\
&    \frac{2 }{\theta}\Big[p\rho^3-q\xi^3\Big]
= \mu_{3,n}-\mu_{2,n} \mu_{1,n}-2\mu_{1,n}(\mu_{2,n}-\mu_{1,n}^2)\,, \label{e.3.8}\\  
    &\frac{1}{\theta}e^{-\theta h}\Big[p\rho^2+q\xi^2\Big]=\mu_{4,n}-\mu_{1,n}^2\,. \label{e.3.9}
\end{align}
Thus we have the explicit form \eqref{e.3.6}-\eqref{e.3.9} for \eqref{e.3.4}.  Now we want to solve this system of function equations (e.g. \eqref{e.3.6}-\eqref{e.3.9}). 
Dividing \eqref{e.3.7} by \eqref{e.3.9} gives 
\begin{equation}
   \hat{ \theta}_n=\frac{1}{h}\ln \Big(\frac{\mu_{2,n}-\mu_{1,n}^2}{\mu_{4,n}-  \mu_{1,n} ^2}\Big) \,. 
   \label{e.3.10} 
\end{equation}  
Now we use  the three equations 
\eqref{e.3.6}-\eqref{e.3.8} to solve for the remaining three unknowns 
$p, \rho, \xi$ (noticing $q=1-p$). 
Denote 
\begin{equation}  
\left\{
\begin{split} 
f_1 &=\hat{ \theta}_n \mu_{1,n}\,,    \\
    f_2 &=\hat{ \theta}_n \left(
     \mu_{2n} -\mu_{1,n}^2\right) \,,  \\ 
    f_3  &=\frac{\hat{ \theta}_n}{2}   \left(\mu_{3,n}-\mu_{2,n} \mu_{1,n}-2\mu_{1,n}(\mu_{2,n}-\mu_{1,n}^2)\right) \,.  \end{split}\right. \label{e.3.11} 
\end{equation} 
Thus we have 
\begin{equation}
\left\{
\begin{split}& p\rho-(1-p) \xi =f_1 \\
& p\rho^2+(1-p)\xi^2 =f_2 \\
& p\rho^3-(1-p)\xi^3 =f_3 
\end{split}\right.  \label{e.3.12} 
\end{equation} 
The first equation in \eqref{e.3.12} yields
\begin{equation}
\xi=\frac{p\rho-f_1}{1-p}\,. \label{e.3.13} 
\end{equation}
Substituting to the second  equation in \eqref{e.3.12} 
we have 
\[
p\rho^2-2f_1p\rho+f_1^2-f_2(1-p)=0\,.
\]
Solving for $\rho$, we have 
\begin{equation}
\rho=\frac{f_1p\pm\sqrt{p(1-p)(f_2-f_1^2)}}{p}\,.
\label{e.3.14} 
\end{equation} 
Recalling  $\si=1$,  $\rho=\frac{1}{\eta}$ and $\xi=\frac{1}{\varphi}$    we have 
\[
f_2-f_1^2=p\rho^2+q\xi^2-(p\rho-q\xi)^2
=p(1-p)\rho^2+q(1-q)\xi^2+2pq\rho  \xi >0
\]
  so the discriminant defining  $\rho$ 
  (ie \eqref{e.3.14})  is  nonnegative.
Moreover, since 
$\xi=\frac{1}{\varphi}$, we see from \eqref{e.3.13}  that 
$\frac{p\rho-f_1}{1-p}=\frac{1}{\varphi}$ which means 
\[
\rho=\frac{f_1}{p}+\frac{1-p}{\varphi}>
f_1\,.
\]
Thus in \eqref{e.3.14}, we should take the positive sign to obtain
\begin{equation}
\rho=\frac{f_1p+\sqrt{p(1-p)(f_2-f_1^2)}}{p}\,.
\label{e.3.15} 
\end{equation}  
Now we substitute  $\xi$ given by \eqref{e.3.13}   into the third equation in \eqref{e.3.12} to obtain 
\[
 p \rho^3  -(1-p) \left(\frac{p\rho-f_1}{1-p}\right)^3=f_3\,. 
 \]
This means  
$$f_3(1-p)^2=p(1-p)^2\rho^3+(f_1-p\rho)^3\,.$$
Finally we substitute  $\rho$ in  the above equation 
by  \eqref{e.3.15}  to obtain one function equation for only one unknown   $p$: 
\begin{eqnarray}
&& (1-p)^2\left( f_1p+\sqrt{p(1-p)(f_2-f_1^2)}  \right) ^3\label{e.3.16} \\
&&\qquad  + p^2 \left(f_1-  f_1p - \sqrt{p(1-p)(f_2-f_1^2)} \right)^3-f_3p^2(1-p)^2=0\,. \nonumber
\end{eqnarray}
This equation depends on   $f_1, f_2, f_3$ 
computed from   the observation data of the 
double exponential Ornstein-Ulenbeck process. 
Although it is still hard to know if this function equation has a unique global solution or not. However, since it contain only one equation for one unknown we can plot the graph of the function (we consider the left hand side of \eqref{e.3.16} as a function  $g(p), 0< p< 1$) to see if $g(p)$ has a unique solution on the interval $0<p< 1$ or not. Or we can  plot  the derivative  
$g'(p)$ on $0< p< 1$ to see if $g'(p)$ remains the same sign or not.  If $g'(p)>0$ 
(or $g'(p)<0$) on $0< p< 1$,  then $g(p)=0$ has at most  one  solution on $(0, 1)$ by the mean value theorem.  

We   summarize the above discussions as the following theorem about the existence and uniqueness of the parameter estimators and their strong consistency results.
\begin{theorem}\label{t.3.1}  From the observation data, we denote 
$\mu_{k, n}$, $k=1, 2, 3, 4$  by \eqref{e.3.4}.
Then  $\hat \theta_n$ is given by \eqref{e.3.10},   namely
\begin{equation}
   \hat{ \theta}_n=\frac{1}{h}\ln \Big(\frac{\mu_{2,n}-\mu_{1,n}^2}{\mu_{4,n}-  \mu_{1,n} ^2}\Big)   
   \label{e.3.17} 
\end{equation} 
 and $f_k, k=1, 2, 3$ by 
\eqref{e.3.11}.  If \eqref{e.3.16} has a unique solution $\hat p_n$  on $(0, 1)$,  namely,
\begin{eqnarray}
&& (1-\hat p_n)^2\left( f_1\hat p_n+\sqrt{\hat p_n(1-\hat p_n)(f_2-f_1^2)}  \right) ^3\label{e.3.18} \\
&&\qquad  + \hat p_n^2 \left(f_1-  f_1\hat p_n - \sqrt{\hat p_n(1-\hat p_n)(f_2-f_1^2)} \right)^3-f_3\hat p_n^2(1-\hat p_n)^2=0  \nonumber
\end{eqnarray}
and if $\hat p_n$ is a continuous function of $f_1, f_2, f_3$,    then \eqref{e.3.6}-\eqref{e.3.9} has a unique solution $(\hat \th_n, \hat \xi_n, \hat \rho_n, \hat p_n)$ given by \eqref{e.3.17}, \eqref{e.3.18} and 
\begin{equation}
\left\{
\begin{split}
\hat \rho_n=&\frac{f_1\hat p_n+\sqrt{\hat p_n(1-\hat p_n)(f_2-f_1^2)}}{\hat p_n}\,, \\
\hat \xi_n=&\frac{\hat p_n\hat 
\rho_n-f_1}{1-\hat p_n}\,. 
\end{split}\right. \label{e.3.18} 
\end{equation}
Define 
\begin{equation}
 \hat \eta_n:=\frac1{\hat \rho_n}\,,\quad 
 \hat\varphi_n:= \frac1{\hat \xi_n}\,. 
\end{equation}
If $(\th, \eta, \varphi, p)$ are  the true parameters, namely, if the double exponential process $X_t$ satisfies \eqref{e.1.2} with the above parameters and with $\al=\sigma=1$, and 
if \eqref{e.3.16} has a unique solution when $f_1, f_2, f_3$ are replaced by their limits as $n\to \infty$,   then when $n\rightarrow \infty$,  $(\hat \th_n, \hat \eta_n, \hat \varphi_n, \hat p_n)
\to  (\th, \eta, \varphi, p)$    almost surely.   
\end{theorem}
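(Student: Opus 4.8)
The plan is to split the statement into two independent tasks: first, confirming that the nonlinear system \eqref{e.3.6}--\eqref{e.3.9} is algebraically equivalent to the triangular cascade that produces \eqref{e.3.17}, \eqref{e.3.15}, \eqref{e.3.13} and \eqref{e.3.16}, so that a unique root $\hat p_n\in(0,1)$ of \eqref{e.3.16} yields a unique quadruple $(\hat\theta_n,\hat\xi_n,\hat\rho_n,\hat p_n)$; and second, proving the almost sure convergence of this quadruple to the true parameters. The first task is essentially to collect the reductions already carried out before the statement, while the second is where the genuine probabilistic content lies.

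For existence and uniqueness, the key structural point is that $\theta$ decouples from the rest: dividing \eqref{e.3.7} by \eqref{e.3.9} cancels the common factor $p\rho^2+q\xi^2$ and gives $e^{-\theta h}=(\mu_{4,n}-\mu_{1,n}^2)/(\mu_{2,n}-\mu_{1,n}^2)$, hence \eqref{e.3.17}, irrespective of $(p,\rho,\xi)$. With $\hat\theta_n$ fixed the coefficients $f_1,f_2,f_3$ of \eqref{e.3.11} are determined, and \eqref{e.3.6}--\eqref{e.3.8} become the three-equation system \eqref{e.3.12}. I would then argue that the elimination $\xi\mapsto\rho\mapsto p$ is reversible: \eqref{e.3.13} expresses $\xi$ through $(p,\rho)$, the quadratic \eqref{e.3.14} expresses $\rho$ through $p$, and the sign in \eqref{e.3.14} is forced, since among the two roots $\tfrac{f_1p\pm\sqrt{p(1-p)(f_2-f_1^2)}}{p}$ only the $+$ root exceeds $f_1$, and the positivity requirement $\hat\xi_n=1/\hat\varphi_n>0$ together with \eqref{e.3.13} imposes $\rho>f_1$. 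The radicand is nonnegative at any genuine solution because $f_2-f_1^2=pq(\rho+\xi)^2\ge 0$. Consequently solutions of \eqref{e.3.12} with $\rho,\xi>0$ are in bijection with the interior roots of \eqref{e.3.16}, so uniqueness of $\hat p_n$ transfers to uniqueness of the whole solution.

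For strong consistency the plan is to push the ergodic theorem through the explicit formulas. Applying Proposition \ref{p.2.1} to $f(x)=x,x^2,x^3$ and \eqref{e.2.9} to $g(x,y)=xy$ (legitimate since $\EE|\XX_o|^m<\infty$ and $\EE|\XX_o\XX_h|^m<\infty$) gives $\mu_{k,n}\to\mu_k:=\EE[\XX_o^k]$ for $k=1,2,3$ and $\mu_{4,n}\to\EE[\XX_o\XX_h]$ almost surely. Feeding these limits into \eqref{e.3.17} and using the true-parameter identities \eqref{e.3.7} and \eqref{e.3.9} (whose ratio is $e^{\theta h}$) yields $\hat\theta_n\to\theta$ a.s.; then $f_1,f_2,f_3$ converge a.s. to the true combinations $f_1^\ast=p\rho-q\xi$, $f_2^\ast=p\rho^2+q\xi^2$, $f_3^\ast=p\rho^3-q\xi^3$ with $\rho=1/\eta$, $\xi=1/\varphi$. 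Since the true $p$ satisfies \eqref{e.3.16} at $f_i=f_i^\ast$ and this limiting equation is assumed to have a unique interior root, that root is $p$; the continuity of the solution map then gives $\hat p_n\to p$ a.s. Finally I would propagate convergence through \eqref{e.3.15} and \eqref{e.3.13}: because $\hat p_n\to p\in(0,1)$ the denominators stay bounded away from zero, so $\hat\rho_n\to\rho$, $\hat\xi_n\to\xi$, and hence $\hat\eta_n=1/\hat\rho_n\to\eta$, $\hat\varphi_n=1/\hat\xi_n\to\varphi$.

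The step I expect to be the main obstacle is the passage $\hat p_n\to p$. Equation \eqref{e.3.16} always carries the spurious boundary roots $p=0$ and $p=1$, so a pure compactness argument (extract a convergent subsequence $\hat p_{n_k}\to p_\infty\in[0,1]$, pass to the limit in \eqref{e.3.16}, and invoke uniqueness) must first rule out escape of $\hat p_n$ to the boundary; the assumed continuity of $\hat p_n$ as a function of $(f_1,f_2,f_3)$ is exactly what closes this gap, and I would make explicit that continuity is used at the limiting data $(f_1^\ast,f_2^\ast,f_3^\ast)$ where the selected value is the interior root $p\in(0,1)$. A secondary technical point is that the radicand $\hat p_n(1-\hat p_n)(f_2-f_1^2)$ must remain nonnegative for large $n$ so that $\hat p_n$ is real; this is guaranteed almost surely because $f_2^\ast-(f_1^\ast)^2=pq(\rho+\xi)^2>0$, whence $f_2-f_1^2$ is eventually positive.
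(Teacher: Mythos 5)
Your proposal is correct and follows essentially the same route as the paper: the algebraic elimination establishing uniqueness of $(\hat\theta_n,\hat\rho_n,\hat\xi_n,\hat p_n)$ given a unique root $\hat p_n\in(0,1)$ of \eqref{e.3.16}, followed by ergodic convergence of the $\mu_{k,n}$, the assumed continuity of the solution map, and uniqueness of the limiting system to identify the limit with the true parameters. Your explicit attention to the spurious boundary roots $p=0,1$ and to the eventual positivity of the radicand $f_2-f_1^2$ is a welcome refinement of details the paper leaves implicit, but it does not change the argument.
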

\begin{proof}For any fixed $n$, it is clear that $f_1, f_2, f_3$ are continuous function of $\mu_{k, n}$, $k=1, 2, 3, 4$.  So, $\hat \th_n, \hat \xi_n, \hat \rho_n, \hat p_n$ are continuous functions of 
$\mu_{k, n}$, $k=1, 2, 3, 4$. Since $\mu_{k, n}$, $k=1, 2, 3, 4$ have limits as $n\to \infty$, we then see 
$(\hat \th_n, \hat \xi_n, \hat \rho_n, \hat p_n)$ have limits $(\hat  \th , \hat \xi , \hat \rho , \hat p) $. 
However, by  the above  argument, for each $n$,  
$\hat \th_n, \hat \xi_n, \hat \rho_n, \hat p_n$  satisfy \eqref{e.3.6}-\eqref{e.3.9}. Taking the limits of this system of equations we see 
 $(\hat  \th , \hat \xi , \hat \rho , \hat p)  $ satisfies 
\begin{equation}
\left\{\begin{split}
&\frac{1}{\hat \theta}\Big[\hat p\hat \rho-(1-\hat p)\xi\Big]
     =\lim_{n\to\infty}   \mu_{1,n}\,,  \\
&\frac{1}{\hat\theta}\Big[\hat p\hat \rho^2+(1-\hat p) \hat\xi^2\Big]=
   \lim_{n\to\infty} \left[  \mu_{2,n} -\mu_{1,n}^2 
   \right] \,,  \\
&    \frac{2 }{\hat \theta}\Big[\hat p
\hat \rho^3-(1-\hat p) \hat \xi^3\Big]
= \lim_{n\to\infty} \left[\mu_{3,n}-\mu_{2,n} \mu_{1,n}-2\mu_{1,n}(\mu_{2,n}-\mu_{1,n}^2)\right]\,,  \\  
    &\frac{1}{\hat \theta}e^{-\hat \theta h}\Big[\hat p\hat \rho^2+(1-\hat p) \hat \xi^2\Big]=\lim_{n\to\infty} \left[\mu_{4,n}-\mu_{1,n}^2\right] \,.  
\end{split}\right. \label{e.3.20}  
\end{equation} 
 Since \eqref{e.3.16} 
 has a unique solution   when $f_1, f_2, f_n$ are replaced by their limits as $n\to \infty$,  by the same argument as above we can show \eqref{e.3.20} has a unique solution. Obviously, $ (\th ,   \xi ,   \rho ,   p )$ satisfy \eqref{e.3.20}.  Thus $(\hat  \th , \hat \xi , \hat \rho , \hat p)= ( \th ,   \xi ,   \rho ,   p ) $. This means that   when $n\to\infty$, $(\hat \th_n, \hat \xi_n, \hat \rho_n, \hat p_n) \to (  \th ,   \xi ,  \rho ,   p)  $ almost surely and hence we obtain that   when $n\rightarrow \infty$,  $(\hat \th_n, \hat \eta_n, \hat \varphi_n, \hat p_n)
\to  (\th, \eta, \varphi, p)$    almost surely. 
 \end{proof}
\begin{remark} The estimators $(\hat \th_n, \hat \xi_n, \hat \rho_n, \hat p_n) $  defined in the above theorem are called the ergodic estimators of the parameters
$ (  \th ,   \xi ,  \rho ,   p) $.  The above theorem states that these ergodic estimators are uniquely determined and are strongly consistent. 
\end{remark} 

\section{Joint asymptotic behavior of all the obtained estimators}
In this section, we   shall prove the central limit theorem for  our ergodic estimators $\hat{\Theta}_n
=(\hat{\theta}_n,\hat{\eta}_n,\hat{\varphi}_n ,\hat{p}_n)$. Our  goal is to  prove that $\sqrt{n}(\hat{\Theta}_n-\Theta)$, where  $\Theta=( \theta,  \eta,\varphi, p)$  converges in law to  a mean zero normal vector and to find the asymptotic covariance matrix.  
Let 
\[  
\left\{\begin{split}
&g(x,y)= (g_1(x,y), g_2(x,y), g_3(x,y), 
g_4(x,y))^T\,,\\
& g_1(x,y)= x,\quad  g_2(x,y)=x^2, \quad g_3(x,y)=x^3,\quad g_4(x,y)=xy
 \end{split}\right. 
 \]
 and
 \[
\mu=(\mu_1,\mu_2,\mu_3,\mu_4)
 \,,\quad\hbox{where}\quad \mu_k=\EE[g_j(\XX_o,\XX_h)],\quad k=1,2,3,4\,.
 \] 
Denote
\[
\mu_n=(\mu_{1, n}\,,  \mu_{2, n}\,,\mu_{3, n}\,,\mu_{4, n})\,,
\]
where $\mu_{k,n}$,  $k=1,2,3,4$ are defined by 
\eqref{e.3.4}. 

First, we have the following  central limiting result. 
\begin{lemma}\label{l.4.1}  Let $  \mu_n$,  $\mu$ 
and $g$ be defined as above.  Then as $n\to\infty$, we have 
\begin{equation}
  \sqrt{n}(\mu_n-\mu)\xrightarrow[]{d}N(0, A)\,,   
\end{equation}
with  the $4\times 4$  covariance matrix $A$  being given by 
\begin{equation}
A=\left( \sigma_{g_ig_j}\right)_{1\le i,j\le 4}\,, 
\end{equation}
where $  \sigma_{g_ig_j}$, $1\le i,j\le 4$ will be given in the appendix. 
\end{lemma}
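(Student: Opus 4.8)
The plan is to express $\sqrt{n}(\mu_n-\mu)$ as a normalized partial sum of a strictly stationary, geometrically mixing $\RR^4$-valued sequence, and then to invoke a multivariate central limit theorem for such sequences through the Cram\'er--Wold device. Since by Proposition \ref{p.2.1} and \eqref{e.2.9} the almost sure limits $\mu_{k,n}\to\mu_k$ are independent of the initial condition $x_0$, and since for a geometrically ergodic chain the central limit theorem holds from any starting point with the same limiting variance, it is convenient to work with the stationary version of the process, i.e. to take $\XX_0=\XX_o$ distributed according to \eqref{e.2.8}. Then $\{X_{t_j};\,j\ge 1\}$ is a strictly stationary Markov chain, so the $\RR^4$-valued sequence
\[
W_j:=g(X_{t_j},X_{t_{j+1}})=\big(X_{t_j},\,X_{t_j}^2,\,X_{t_j}^3,\,X_{t_j}X_{t_{j+1}}\big)^T,\qquad j\ge 1,
\]
is strictly stationary with mean $\mu$, and $\sqrt{n}(\mu_n-\mu)=\frac{1}{\sqrt n}\sum_{j=1}^n(W_j-\mu)$. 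All moments of $W_j$ are finite because $\EE|\XX_o|^m<\infty$ for every $m$ (as noted before \eqref{e.3.4}); in particular $\EE|W_j|^{2+\delta}<\infty$ for some $\delta>0$.

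Second, I would establish the mixing property of the skeleton chain. The chain $\{X_{t_j}\}$ is the $h$-step sampling of the compound-Poisson-driven Ornstein--Uhlenbeck process \eqref{e.1.2}, whose one-step transition contracts the deterministic part by the factor $e^{-\theta h}<1$. Using a Lyapunov/drift inequality with test function $V(x)=1+|x|^2$ (the finiteness of the jump moments of $Z_t$ guarantees the drift condition) together with $\psi$-irreducibility and aperiodicity, the Meyn--Tweedie theory already invoked via \cite{Meyn} yields geometric ergodicity of $\{X_{t_j}\}$. Hence the chain is absolutely regular ($\beta$-mixing) with coefficients $\beta(k)\le C r^k$ for some $r\in(0,1)$. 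Because $W_j$ is $\sigma(X_{t_j},X_{t_{j+1}})$-measurable, the mixing coefficients of $\{W_j\}$ are dominated by $\beta(k-1)$ and decay geometrically as well; in particular $\sum_k \beta(k)^{\delta/(2+\delta)}<\infty$.

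Third, I would apply a central limit theorem for stationary mixing sequences. By the Cram\'er--Wold device it suffices to prove, for every fixed $c=(c_1,c_2,c_3,c_4)^T\in\RR^4$, that the scalar stationary sequence $\langle c,W_j-\mu\rangle$ satisfies $\frac{1}{\sqrt n}\sum_{j=1}^n\langle c,W_j-\mu\rangle\xrightarrow{d}N(0,c^TAc)$. This is the classical CLT for centered, strictly stationary, $\beta$-mixing sequences under the moment bound $\EE|\langle c,W_0\rangle|^{2+\delta}<\infty$ and summability of the mixing coefficients, of the same type as the ergodic result \cite{1961} already used for \eqref{e.2.9}. The limiting variance is the long-run variance, and expanding it bilinearly in $c$ identifies $A=(\sigma_{g_ig_j})_{1\le i,j\le 4}$ with
\[
\sigma_{g_ig_j}=\mathrm{Cov}\big(g_i(\XX_0,\XX_h),g_j(\XX_0,\XX_h)\big)+\sum_{k=1}^\infty\Big[\mathrm{Cov}\big(g_i(\XX_0,\XX_h),g_j(\XX_{t_k},\XX_{t_k+h})\big)+\mathrm{Cov}\big(g_j(\XX_0,\XX_h),g_i(\XX_{t_k},\XX_{t_k+h})\big)\Big],
\]
where the series converges absolutely by the covariance inequality for mixing sequences combined with the geometric decay of $\beta(k)$ and the finite $(2+\delta)$-moments. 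The explicit evaluation of the entries $\sigma_{g_ig_j}$ is deferred to the appendix.

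The main obstacle is the rigorous verification of the geometric ergodicity (and hence geometric $\beta$-mixing) of the skeleton chain: because the driving noise is a pure-jump compound Poisson process rather than a Gaussian one, the chain is not obviously strongly Feller, so the minorization/small-set argument underlying the Meyn--Tweedie criterion must be carried out with care, as must the irreducibility. Once the geometric mixing and the moment bounds are in place, the CLT itself and the identification of $A$ reduce to standard limit theory and routine covariance estimates.
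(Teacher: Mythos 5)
Your proposal is correct and follows essentially the same route as the paper: Cram\'er--Wold reduction to a scalar stationary sequence, exponential $\beta$-mixing of the sampled chain, and the CLT for stationary strongly mixing sequences with finite $(2+\delta)$-moments, with the long-run covariance identified as $A=(\sigma_{g_ig_j})$. The only difference is that where you propose to establish geometric ergodicity from scratch via a Meyn--Tweedie drift/minorization argument (and rightly flag the irreducibility issue as the delicate point), the paper simply cites \cite{Mas07} and \cite{Van} for the exponential $\beta$-mixing and then invokes the $\alpha$-mixing CLT of \cite{IL71}.
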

\begin{proof}
We shall use the  Cramer-Wold device (e.g. \cite[Theorem 29.4]{Billingsley}).  For any $a=(a_1, a_2, a_3, a_4)^T\in \RR^4$, 
consider  $ a^T \mu_n=\sum_{k=1}^4 a_k \mu_{k, n}$.  
By   \cite[Theorem 2.6]{Mas07} and \cite{Van},   the double exponential Ornstein-Uhlenbeck  process $\{X_t\}$ is  exponentially $\beta$-mixing. By the fact that  the exponential  $\beta$-mixing implies the  exponential  $\alpha$-mixing and  by  
the central limit theorem 
(e.g. \cite[Theorem 18.6.2]{IL71}) for stationary process with exponential $\alpha$-mixing, we have
\begin{equation}
\sqrt{n} a^T (\mu_n-\mu)\xrightarrow[]{d}\mathcal{N}(0,\sigma_a^2)\,, 
\end{equation}  
Since $a\in \RR^4$ is arbitrary, we prove the lemma 
through the Cramer-Wold device. 
\end{proof}
Denote
\[
\left\{\begin{split}
&h_1(  \th ,   \xi ,  \rho ,   p) =\frac{1}{\theta}\Big[p\rho-(1-p) \xi\Big] \,, \\
&h_2(  \th ,   \xi ,  \rho ,   p)=\frac{1}{\theta}\Big[p\rho^2+(1-p)\xi^2\Big]  \,,   \\
&  h_3(  \th ,   \xi ,  \rho ,   p)=  \frac{1 }{\theta}\Big[p\rho^3-(1-p)\xi^3\Big] \,, \\  
    &h_4(  \th ,   \xi ,  \rho ,   p)=\frac{1}{\theta}e^{-\theta h}\Big[p\rho^2+(1-p)\xi^2\Big] \,.
  \end{split}\right. 
\] 
and 
\[
\left\{\begin{split}
&\tilde h_1(\mu_1, \mu_2, \mu_3,,\mu_4)
     =\mu_1\,,  \\
&\tilde h_2(\mu_1, \mu_2, \mu_3,\mu_4)
=    \mu_2 -\mu_1^2 \,,   \\
&\tilde h_3(\mu_1, \mu_2, \mu_3,\mu_4)=   \mu_{3 }-\mu_{2 } \mu_{1 }-2\mu_{1 }(\mu_{2 }-\mu_{1 }^2)\,,  \\  
    &\tilde h_4(\mu_1, \mu_2, \mu_3,\mu_4)= \mu_{4 }-\mu_{1 }^2\,. 
     \end{split}\right. 
\]  
Set
\[
h=(h_1, h_2, h_3, h_4)^T \quad\hbox{and}\quad 
\tilde h=(\tilde h_1, \tilde h_2, \tilde h_3, \tilde  h_4)^T
\,.
\]
We compute the partial derivative of $\tilde h$ with respect to $\mu$ to obtain
\begin{align*}
&\frac{ \partial{\tilde h_1}}{\partial{\mu_1}}=1\,,\quad 
\frac{ \partial{\tilde h_1}}{\partial{\mu_2}}=\frac{ \partial{\tilde h_1}}{\partial{\mu_3}}=\frac{ \partial{\tilde h_1}}{\partial{\mu_4}}=0\\    
&\frac{ \partial{\tilde h_2}}{\partial{\mu_1}}=-2\mu_1\,,\quad \frac{ \partial{\tilde h_2}}{\partial{\mu_2}}=1\,,\quad
\frac{ \partial{\tilde h_1}}{\partial{\mu_3}}=\frac{ \partial{\tilde h_1}}{\partial{\mu_4}}=0\\  
&\frac{ \partial{\tilde h_3}}{\partial{\mu_1}}=-3\mu_2-6\mu_1^2\,,\quad \frac{ \partial{\tilde h_3}}{\partial{\mu_2}}=-3\mu_1\,,\quad\frac{ \partial{\tilde h_3}}{\partial{\mu_3}}=1\,,\quad \frac{ \partial{\tilde h_3}}{\partial{\mu_4}}=0\\
&\frac{ \partial{\tilde h_4}}{\partial{\mu_1}}=-2\mu_1\,,\quad \frac{ \partial{\tilde h_4}}{\partial{\mu_4}}=1\,,\quad \frac{ \partial{\tilde h_4}}{\partial{\mu_2}}=\frac{ \partial{\tilde h_4}}{\partial{\mu_3}}=0
\end{align*}
We compute  the partial derivatives of $h$ with respect to the parameters to obtain 
\begin{align*}
&\frac{ \partial{h_1}}{\partial{p}}=\frac{1}{\theta}(\rho+\xi)\,,\quad 
\frac{ \partial{h_2}}{\partial{p}}=\frac{1}{\theta}(\rho^2-\xi^2)\,,\quad 
\frac{ \partial{h_3}}{\partial{p}}=\frac{1}{\theta}(\rho^3+\xi^3)\\
&\frac{ \partial{h_4}}{\partial{p}}=\frac{1}{\theta}e^{-\theta h}(\rho^2-\xi^2)\,,\quad 
\frac{ \partial{h_1}}{\partial{\rho}}=\frac{1}{\theta}(p-q\xi)\,,\quad 
\frac{ \partial{h_2}}{\partial{\rho}}=\frac{1}{\theta}(2p\rho+q\xi^2)\\
&\frac{ \partial{h_3}}{\partial{\rho}}=\frac{1}{\theta}(3p\rho^2-q\xi^3)\,,\quad 
\frac{ \partial{h_4}}{\partial{\rho}}=\frac{1}{\theta}e^{-\theta h}(2p\rho+q\xi^2)\,,\quad 
\frac{ \partial{h_1}}{\partial{\xi}}=\frac{1}{\theta}(p\rho-q)\\
&\frac{ \partial{h_2}}{\partial{\xi}}=\frac{1}{\theta}(p\rho^2+2q\xi)\,,\quad 
\frac{ \partial{h_3}}{\partial{\xi}}=\frac{1}{\theta}(p\rho^3-3q\xi^2)\,,\quad 
\frac{ \partial{h_4}}{\partial{\xi}}=\frac{1}{\theta}e^{-\theta h}(p\rho^2+2q\xi)\\
&\frac{ \partial{h_1}}{\partial{\theta}}=\frac{-1}{\theta}(p\rho-q\xi)\,,\quad 
\frac{ \partial{h_2}}{\partial{\theta}}=\frac{-1}{\theta}(p\rho^2+q\xi^2)\,,\quad 
\frac{ \partial{h_3}}{\partial{\theta}}=\frac{-1}{\theta}(p\rho^3-q\xi^3)\\
&\frac{ \partial{h_4}}{\partial{\theta}}=\frac{-1}{\theta}e^{-\theta h}(p\rho^2+q\xi^2)\Big[\frac{1}{\theta}+1\Big]
\end{align*}
Let us denote the matrix
\[
\nabla_\Theta h(\Theta)=
\begin{pmatrix}
\frac{\partial h_1}{\partial p } &\frac{\partial h_1}{\partial \rho } & \frac{\partial h_1}{\partial \xi } &\frac{\partial h_1}{\partial \theta }\\
\frac{\partial h_2}{\partial p }&\frac{\partial h_2}{\partial \rho }  & \frac{\partial h_2}{\partial \xi }  &\frac{\partial h_2}{\partial \theta }\\
\frac{\partial h_3}{\partial p } &\frac{\partial h_3}{\partial \rho }  &  \frac{\partial h_3}{\partial \xi } &\frac{\partial h_3}{\partial \theta }\\
\frac{\partial h_4}{\partial p } &\frac{\partial h_4}{\partial \rho }  &  \frac{\partial h_4}{\partial \xi } &\frac{\partial h_4}{\partial \theta }
\end{pmatrix}
\]	

Then we have the following result.
\begin{theorem}
Denote $\Theta=( \theta , \eta,\varphi, p )$  and $\hat{\Theta}_n
=(\hat{\theta}_n ,\hat{\eta}_n,\hat{\varphi}_n,\hat{p}_n)$. If $\hat p_n$ is a continuous function of $f_1, f_2, f_3$ and  
if \eqref{e.3.16} has a unique solution when $f_1, f_2, f_3$ are replaced by their limits as $n\to \infty$,   then   as $n\rightarrow\infty$ we have  
\begin{equation}
\sqrt{n}(\hat \Theta_n -\Theta)\xrightarrow[]{d}\mathcal{N}(0, \Sigma )
\end{equation}
where 
\begin{equation}
 \Sigma =\left(\left(\nabla h\right)^{-1} \nabla \tilde h
 \right)^T  A \left(\nabla h\right)^{-1} \nabla \tilde h\,. 
\end{equation}
\end{theorem}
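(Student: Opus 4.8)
The plan is to use the delta method: Lemma~\ref{l.4.1} supplies the asymptotic normality of the empirical moments $\mu_n$, and the implicit relations defining the estimators will transport it to $\hat\Theta_n$. The first step is to record the two defining identities. By the construction of Section~3 (equations \eqref{e.3.6}--\eqref{e.3.9}), the estimators satisfy $h(\hat\Theta_n)=\tilde h(\mu_n)$, while the true parameters together with the population moments satisfy the limiting version $h(\Theta)=\tilde h(\mu)$; the latter holds because the ergodic theorem forces $\mu_{k,n}\to\mu_k$ a.s.\ and the true parameters solve \eqref{e.3.6}--\eqref{e.3.9} in the limit. Subtracting the two identities gives
\[
h(\hat\Theta_n)-h(\Theta)=\tilde h(\mu_n)-\tilde h(\mu)\,.
\]

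Next I would linearize. Theorem~\ref{t.3.1} already yields strong consistency, $\hat\Theta_n\to\Theta$ a.s., and $\mu_n\to\mu$ a.s. Since $h$ and $\tilde h$ are explicit $C^1$ maps, a componentwise first-order Taylor expansion of the two sides produces
\[
M_n\,(\hat\Theta_n-\Theta)=\tilde M_n\,(\mu_n-\mu)\,,
\]
where the rows of $M_n$ and $\tilde M_n$ are the gradients of the $h_i$ and $\tilde h_i$ evaluated at intermediate points on the segments joining $\hat\Theta_n$ to $\Theta$ and $\mu_n$ to $\mu$. By the a.s.\ convergence of these intermediate points and the continuity of the partial derivatives tabulated above, $M_n\to\nabla h(\Theta)$ and $\tilde M_n\to\nabla\tilde h(\mu)$ a.s. Assuming $\nabla h(\Theta)$ is invertible, I would invert and scale by $\sqrt n$ to obtain
\[
\sqrt n\,(\hat\Theta_n-\Theta)=M_n^{-1}\tilde M_n\,\sqrt n\,(\mu_n-\mu)\,,
\]
so that, since $M_n^{-1}\tilde M_n\to[\nabla h(\Theta)]^{-1}\nabla\tilde h(\mu)$ a.s., Slutsky's theorem together with Lemma~\ref{l.4.1} gives $\sqrt n(\hat\Theta_n-\Theta)\xrightarrow[]{d}N(0,\Sigma)$ with the sandwiched covariance $\Sigma=\big((\nabla h)^{-1}\nabla\tilde h\big)A\big((\nabla h)^{-1}\nabla\tilde h\big)^{T}$, which is the asserted form (the placement of the transpose being fixed by the row/column convention adopted for $\nabla_\Theta h$ and $\nabla\tilde h$).

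The main obstacle is the invertibility of $\nabla h(\Theta)$ at the true parameter, which is what legitimizes the inversion above and simultaneously makes $\hat\Theta_n$ a differentiable function of $\mu_n$. I would establish it by tying it to the local well-posedness already discussed in Section~3: by the inverse function theorem, invertibility of $\nabla h(\Theta)$ is equivalent to local uniqueness of the solution of \eqref{e.3.6}--\eqref{e.3.9}, which under the standing hypotheses holds precisely when \eqref{e.3.16} has a locally unique root $\hat p$ with $g'(\hat p)\neq 0$. Concretely I would either compute $\det\nabla h(\Theta)$ from the explicit derivative table and verify that it does not vanish on the admissible range $\{\theta>0,\ \rho>0,\ \xi>0,\ 0<p<1\}$, or argue structurally from the triangular solution order (first $\hat\theta_n$ from \eqref{e.3.10}, then $\hat p_n$, $\hat\rho_n$, $\hat\xi_n$) that the Jacobian is nonsingular whenever the implicit-function hypothesis on \eqref{e.3.16} is in force. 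The explicit entries of $A$ that make $\Sigma$ fully concrete are deferred to the appendix and do not enter this argument.
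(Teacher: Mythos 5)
Your proposal is correct and follows essentially the same route as the paper: both express the estimator as $\hat\Theta_n=(h^{-1}\circ\tilde h)(\mu_n)$ and apply the delta method to Lemma~\ref{l.4.1}, your mean-value linearization being just an unpacked version of that one-line argument. You are in fact more explicit than the paper on the one delicate point, the nonsingularity of $\nabla h(\Theta)$ (which the paper infers only informally from Theorem~\ref{t.3.1}), and your placement of the transpose, $J A J^{T}$ with $J=(\nabla h)^{-1}\nabla\tilde h$ rather than $J^{T} A J$, is the one consistent with the paper's stated row convention for $\nabla_\Theta h$.
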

\begin{proof}
It is easy to see that  $h, \tilde h:\RR^4\to \RR^4$ defined as above are smooth mappings. Using these two mappings, we can write the system 
\eqref{e.3.6}-\eqref{e.3.9} to determine the ergodic estimators $\Theta_n$ 
\begin{equation}
h(\Theta_n) =\tilde h(\mu_n)\,. 
\end{equation}
From Theorem \ref{t.3.1}, it follows that 
$h$ has inverse $h^{-1}$ so that
\[
 \Theta_n  = (h^{-1}\circ \tilde h)(\mu_n)\,. 
\]
By Lemma \ref{l.4.1} and the Delta method, we
see that 
\begin{equation}
\sqrt{n}(\hat \Theta_n -\Theta)\xrightarrow[]{d}\mathcal{N}(0, \Sigma )
\end{equation}
where 
\begin{align*}
 \Sigma =
 &(\nabla_\mu (h^{-1}\circ \tilde h))^T  A  \nabla_\mu (h^{-1}\circ \tilde h)\\
 =& \left(\left(\nabla h\right)^{-1} \nabla \tilde h
 \right)^T  A \left(\nabla h\right)^{-1} \nabla \tilde h\,. 
\end{align*}
This proves the theorem. 
\end{proof}

\section{Exact Simulation for the double exponential Ornstein-Uhlenbeck  process}
Before we give some numerical simulations to validate our ergodic estimators, in this section we propose  a distributional decomposition to  exactly  simulate the double exponential Ornstein-Uhlenbeck  process.   We follow the idea  of   \cite{Qu}, where  the  exact simulation of Gamma Ornstein-Uhlenbeck  process is studied. First, we have the following result. Without loss of generality we can assume $\si=1$. 
\begin{theorem}Let $X_t$ be the   double exponential Ornstein-Uhlenbeck  process given by \eqref{e.1.2}.  
For any $t, t_1>0$, the  Laplace transform of $X_{t+t_1}$ conditioning  on $X_t$ is given  by 
\begin{equation}
\begin{split}
\EE[e^{i uX_{t+t_1}}|X_t]=e^{-iuwX_t}\exp\Big[\frac{-\lambda p}{\theta}\int_{0}^{\infty}(1-e^{-ius})\int_{1}^{1/w}\eta v e^{-s\eta v}\frac{1}{v}dvds\\
-\frac{\lambda q}{\theta}\int_{-\infty}^0(1-e^{-ius})\int_{1}^{1/w}\phi v e^{s\phi v}\frac{1}{v}dvds\Big]\,, 
\end{split}\label{e.5.4} 
\end{equation}
where   $w=e^{-\theta t_1}$. 
\end{theorem}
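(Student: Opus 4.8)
The plan is to use the Markov structure of the explicit solution \eqref{e.2.1} to split $X_{t+t_1}$ into an $\mathcal F_t$-measurable piece and an independent piece, and then to recast the characteristic exponent of the independent piece in L\'evy--Khintchine form. With $\sigma=1$ and $w=e^{-\theta t_1}$, \eqref{e.2.1} gives
\[
X_{t+t_1}=wX_t+J,\qquad J:=\int_t^{t+t_1}e^{-\theta(t+t_1-s)}\,dZ_s ,
\]
and $J$ depends only on the increments of $Z$ over $[t,t+t_1]$, hence is independent of $X_t$. Conditioning therefore yields $\EE[e^{iuX_{t+t_1}}\mid X_t]=e^{iuwX_t}\,\EE[e^{iuJ}]$, so everything reduces to computing the (unconditional) characteristic function of $J$.

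For this I would apply Proposition \ref{p.2.2} to the deterministic integrand $e^{-\theta(t+t_1-s)}$, obtaining $\EE[e^{iuJ}]=\exp\big[\int_t^{t+t_1}\Psi(e^{-\theta(t+t_1-s)}u)\,ds\big]$ with $\Psi$ as in \eqref{e.2.4}. Inserting $\Psi(z)=\lambda\int_\RR(e^{izy}-1)f_Y(y)\,dy$ and performing the time change $r=t+t_1-s$ converts the exponent into $\lambda\int_0^{t_1}\int_\RR(e^{ie^{-\theta r}uy}-1)f_Y(y)\,dy\,dr$. Splitting $f_Y$ from \eqref{e.1.1} into its positive part $p\eta e^{-\eta y}$ on $\{y>0\}$ and its negative part $q\varphi e^{\varphi y}$ on $\{y<0\}$ produces the two summands of \eqref{e.5.4}, which are handled identically; I carry the positive part and mirror it. Since $Z$ is a pure-jump compound Poisson process, $\Psi$ already has pure-jump form with no Gaussian or drift term, which is why \eqref{e.5.4} contains only these two jump integrals.

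The core of the argument is to turn the scaled-jump/time double integral into the L\'evy-measure form of \eqref{e.5.4}. For the positive part I would absorb the scaling into the jump variable via $x=e^{-\theta r}y$, replacing $\eta e^{-\eta y}\,dy$ by $\eta e^{\theta r}e^{-\eta e^{\theta r}x}\,dx$, then exchange the $r$- and $x$-integrals by Fubini, and finally substitute $v=e^{\theta r}$ (so $dr=dv/(\theta v)$ and $r\in[0,t_1]$ corresponds to $v\in[1,1/w]$). This gives exactly $\frac{\lambda p}{\theta}\int_0^\infty(e^{iux}-1)\int_1^{1/w}\eta v e^{-\eta v x}\frac1v\,dv\,dx$, i.e. the stated exponent with L\'evy measure $\frac{\lambda p}{\theta}\big(\int_1^{1/w}\eta e^{-\eta vx}\,dv\big)\,dx$ on $(0,\infty)$, and the analogous $\varphi$-expression on $(-\infty,0)$; the prefactor $e^{iuwX_t}$ then completes \eqref{e.5.4}. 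Writing $-iu$ throughout, as in the displayed form of \eqref{e.5.4}, is merely the opposite transform convention.

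The only step that is not a mechanical calculation is the Fubini exchange, so that is where I would spend the care. After the substitution $x=e^{-\theta r}y$ the inner measure $\eta e^{\theta r}e^{-\eta e^{\theta r}x}\,dx$ is the law of an exponential variable of rate $\eta e^{\theta r}$; using $|e^{iux}-1|\le |u|x$, its integral against $|e^{iux}-1|$ is at most $|u|$ times the mean $e^{-\theta r}/\eta$. Since $r$ ranges over the compact interval $[0,t_1]$, the resulting bound $\int_0^{t_1}|u|e^{-\theta r}/\eta\,dr$ is finite, so the double integral converges absolutely and Fubini is justified; the same estimate controls the negative part. With this in place the two changes of variables are elementary and deliver \eqref{e.5.4}.
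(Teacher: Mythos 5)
Your proposal is correct and follows essentially the same route as the paper: both split $X_{t+t_1}=wX_t+J$ with $J$ independent of $\mathcal F_t$, express $\EE[e^{iuJ}]$ through $\Psi$ from Proposition \ref{p.2.2}, and then reshape the exponent by changes of variables and a Fubini exchange into the double-integral (L\'evy-measure) form of \eqref{e.5.4}; your substitutions are performed in a slightly different order (jump variable first, then $v=e^{\theta r}$, versus the paper's $x=ue^{-\theta s}$ first), but this is cosmetic. Your explicit justification of Fubini and your remark that the $-iu$ versus $iu$ discrepancy in \eqref{e.5.4} is a transform-convention (sign) issue are both welcome additions that the paper omits.
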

\begin{proof}
Recall the $\Psi$ defined by \eqref{e.2.4} and the formula \eqref{e.2.5}. 
We can write the characteristic 
function of  $X_{t_1}=\si \int_t^{t+t_1} e^{-(t+t_1-s)}
dZ_s$ as 
\begin{equation}
\begin{split}
\EE[e^{i uX_{t_1}}]=&\exp\Big[\int_t^{t+t_1}\Psi(\sigma e^{-\theta (t+t_1-s)}u)ds\Big]
\\
=&\exp\Big[\int_0^{t_1}-\lambda \Big(1-\EE\Big(e^{(i\sigma e^{-\theta ({t_1}-s)}uY_1)}\Big)\Big)ds\Big]\,. 
\end{split}
\end{equation}
Denote  $ \hat{h}(z)=\EE\Big(e^{ zY_1 }\Big) $.   
The   Laplace transform of $X_{t+t_1}$ conditioning  on $X_t$ is  
\begin{equation}
\begin{split}
\EE[e^{i uX_{t+t_1}}|X_t]=&e^{-iuwX_t}\exp\Big[\int_t^{t+t_1}-\lambda \Big(1-\hat{h}(i\sigma e^{-\theta (t+t_1-s)}u)\Big)ds\Big]\\
=&e^{-iuwX_t}\exp\Big[\int_0^{t_1}-\lambda \Big(1-\hat{h}(i\sigma e^{-\theta s}u)\Big)ds\Big]\,. 
\end{split}
\end{equation}
Let $ue^{-\theta t_1}=x$, then for $\sigma=1$, we have 
\begin{align*}
\int_0^{t_1}\Big(1-\hat{h}(iu e^{-\theta s})\Big)ds
=
&\frac{1}{\theta}\int_{u}^{ue^{-\theta t_1}}\frac{(1-\hat{h}(ix))}{x}dx\\
=&I_1+I_2\,,
\end{align*} 
where 
\begin{align*}
I_1=&\frac{1}{\theta}\int_{uw}^{u}\frac{1}{x}\int_0^{\infty}(1-e^{-ixy})\eta p e^{-\eta y}dydx\,, 
\\
I_2=& \frac{1}{\theta}\int_{uw}^{u}\frac{1}{x}\int_{-\infty}^0(1-e^{-ixy})\phi q e^{\phi y}dydx\,. 
\end{align*} 
The first term $I_1$   can be written 
\begin{align*}
I_1=&\frac{1}{\theta}\int_{0}^{\infty}\frac{(1-e^{-ius})}{s}\int_s^{s/w}\eta p e^{-\eta y}dyds\\
=&\frac{p}{\theta}\int_{0}^{\infty}\frac{(1-e^{-ius})}{1} \frac{e^{-\eta s}-e^{-\eta s/w}}{s}dy\\
=&\frac{p}{\theta}\int_{0}^{\infty}\frac{(1-e^{-ius})}{1}\int_{\eta}^{\eta/w}e^{-sv}dvds\\
=&\frac{p}{\theta}\int_{0}^{\infty}\frac{(1-e^{-ius})}{1}\int_{1}^{1/w}\eta v e^{-s\eta v}\frac{1}{v}dvds\,.
\end{align*} 
The second term $I_2$  can be written as 
\begin{align*}
I_2=& \frac{1}{\theta}\int_{-\infty}^0\frac{(1-e^{-ius})}{s}\int_s^{s/w}\phi q e^{-\eta y}dyds\\
=&\frac{q}{\theta}\int_{-\infty}^0\frac{(1-e^{-ius})}{1} \frac{e^{\phi s}-e^{\phi s/w}}{s}dy\\
 =& \frac{q}{\theta}\int_{-\infty}^0\frac{(1-e^{-ius})}{1}\int_{\phi}^{\phi/w}e^{sv}dvds\\
 =&\frac{q}{\theta}\int_{-\infty}^0\frac{(1-e^{-ius})}{1}\int_{1}^{1/w}\phi v e^{s\phi v}\frac{1}{v}dvds\,.
\end{align*} 
This gives us \eqref{e.5.4}, 
proving the theorem. 
\end{proof}
Since the second exponential factor on the right hand side of \eqref{e.5.4} is the characteristic function of the  compound Poisson process   
we have
\begin{corollary}[Exact Simulation via Decomposition Approach]  Let $N$ be  a Poisson random variable of rate $\lambda h$
  and let  $\{S_k\}_{k=1,2,\ldots}$  be  i.i.d random variables following a mixture of double exponential distribution 
 \begin{equation}
\begin{split}
f_{S_k}(y)=&  p\eta e^{\theta h U} e^{-\eta e^{\theta h U}y}I_{y\geq 0}+q\phi e^{\theta h U}e^{\phi e^{\theta h U} y}I_{y< 0}\,, \\
 &\qquad   \hspace{5mm}\forall\  k=1\,,2\,,  \ldots\,,  
 \end{split} 
    \end{equation} 
where  $U\overset{d}{=} \mathcal{U}[0,1] $ is the uniform distribution on $[0, 1]$.     Then
    \begin{equation}
    X_{t+h}\overset{d}{=}X_{t}e^{-\theta h}+\sum_{k=1}^{N} S_k \,. \label{e.5.6} 
    \end{equation} 
\end{corollary}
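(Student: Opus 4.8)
The plan is to recognize the second exponential factor on the right-hand side of \eqref{e.5.4} as the characteristic function of a compound Poisson sum, and to match it term by term against the law of $\sum_{k=1}^{N}S_k$. Recall that if $N$ is Poisson with mean $\Lambda$ and $\{S_k\}$ are i.i.d.\ with law $F$, independent of $N$, then $\EE\big[e^{iu\sum_{k=1}^{N}S_k}\big]=\exp\big[\Lambda(\hat F(u)-1)\big]=\exp\big[\int_{\RR}(e^{ius}-1)\,\Lambda F(ds)\big]$, i.e.\ the exponential of an integral against the finite L\'evy measure $\Lambda F$. First I would read the exponent of the second factor in \eqref{e.5.4} as $\int(e^{ius}-1)\,\nu(ds)$ (the sign of $u$ being dictated by the convention under which \eqref{e.5.4} is stated, the same one producing the prefactor $e^{-iuwX_t}$), which exhibits the candidate L\'evy measure
\[
\nu(ds)=\frac{\lambda p}{\theta}\Big(\int_{1}^{1/w}\eta v e^{-s\eta v}\,\frac{dv}{v}\Big)I_{s>0}\,ds+\frac{\lambda q}{\theta}\Big(\int_{1}^{1/w}\phi v e^{s\phi v}\,\frac{dv}{v}\Big)I_{s<0}\,ds\,.
\]

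The key step is to extract $\Lambda$ and $F$ from $\nu$. Integrating the positive part over $s>0$ and using $\int_{0}^{\infty}\eta v e^{-s\eta v}\,ds=1$ gives the mass $\frac{\lambda p}{\theta}\int_{1}^{1/w}\frac{dv}{v}=\frac{\lambda p}{\theta}\ln(1/w)=\lambda p h$, since $w=e^{-\theta h}$; the negative part contributes $\lambda q h$, so that $\Lambda=\nu(\RR)=\lambda h(p+q)=\lambda h$, which is exactly the rate asserted for $N$. It then remains to identify the normalized jump law $F=\nu/\Lambda$ with the mixture density $f_{S_k}$. The crux is the substitution $v=e^{\theta h u}$, under which $\frac{dv}{v}=\theta h\,du$ and $v\in[1,1/w]$ becomes $u\in[0,1]$; this turns $\frac{1}{\theta h}\int_{1}^{1/w}(\cdot)\,\frac{dv}{v}$ into $\int_{0}^{1}(\cdot)\,du$, so that $F$ restricted to $s>0$ equals $\int_{0}^{1}p\eta e^{\theta h u}e^{-\eta e^{\theta h u}s}\,du$, with the symmetric statement on $s<0$. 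This is precisely the law of $S_1$, obtained by first drawing $U\sim\mathcal U[0,1]$ and then sampling from the correspondingly scaled double-exponential density.

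Finally I would dispose of the deterministic factor and assemble the pieces. In the same sign convention, the prefactor $e^{-iuwX_t}$ in \eqref{e.5.4} is the conditional characteristic function of the constant $X_t e^{-\theta h}$ given $X_t$ (recall $w=e^{-\theta h}$); since the fresh jumps of $Z$ on $(t,t+h]$ that produce $N$ and $\{S_k\}$ are independent of $\mathcal F_t$ by the independent-increment property, the two factors multiply, and the conditional characteristic function of $X_t e^{-\theta h}+\sum_{k=1}^{N}S_k$ coincides with the right-hand side of \eqref{e.5.4}. Uniqueness of characteristic functions then delivers the distributional identity \eqref{e.5.6}. I expect the only genuine obstacle to be this middle change of variables: one must see that the inner integral $\int_{1}^{1/w}(\cdot)\,\frac{dv}{v}$ is exactly an expectation over the random scale $e^{\theta h U}$, so that $\nu$ disintegrates as a uniform mixture of scaled double-exponential laws. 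Once this disintegration is recognized, both the Poisson rate and the jump law drop out simultaneously, and the remaining steps are bookkeeping.
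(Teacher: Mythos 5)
Your proposal is correct and follows exactly the route the paper intends: the paper's own ``proof'' is the single remark that the second exponential factor in \eqref{e.5.4} is the characteristic function of a compound Poisson sum, and you supply precisely the verification it leaves implicit (total mass $\frac{\lambda p}{\theta}\ln(1/w)+\frac{\lambda q}{\theta}\ln(1/w)=\lambda h$, and the substitution $v=e^{\theta h u}$ disintegrating the L\'evy measure as a uniform mixture of scaled double-exponential laws). Your handling of the sign convention inherited from \eqref{e.5.4} is also the right way to read that formula, so nothing further is needed.
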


The above formula \eqref{e.5.6}  enables us to simulate the process $X_t$ by the exact decomposition approach. 
%
%

\section{Numerical results}
To validate our estimators discussed in Section 4, we perform some numerical simulations.  We choose the  values of $p=0.6$, $\eta=1.2$, $\varphi=1.6$ and $\theta=2.0$ (and $\la=\si=1$). With these parameters,  we simulate the double exponential Ornstein-Uhlenbeck process   using the exact decomposition algorithm given by \eqref{e.5.6}. 
A simulated sample is displayed in 
Figure \ref{fig:1}.  Figures \ref{fig:2} and
\ref{fig:3} plot the assumed values versus the values by the ergodic estimators. Table 1 lists the approximation of ergodic estimators to the true parameters as the time becomes larger. It demonstrates that the rate of convergence is quite faster. 
\begin{figure}[h!]
\centering
\includegraphics[scale=0.25]{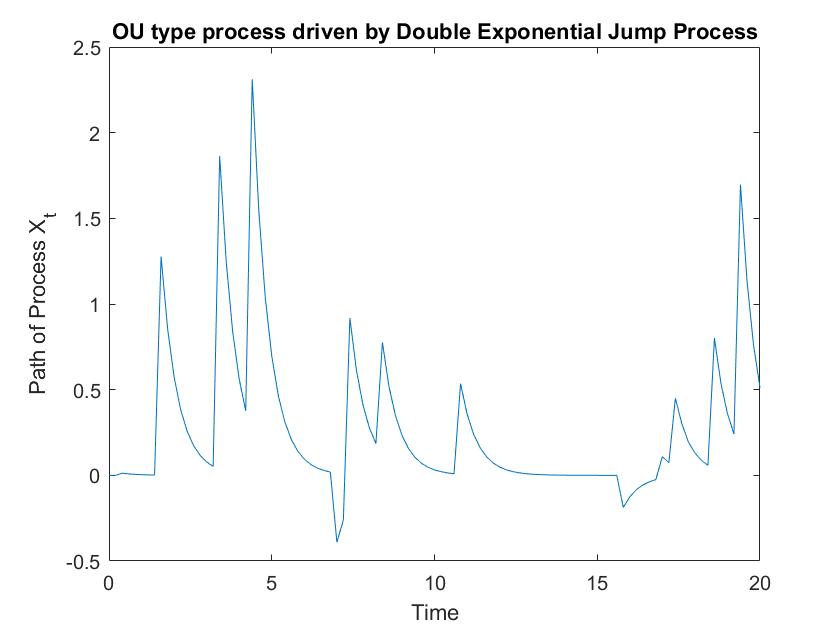}
\caption{Simulated sample path for a double exponential Orntein-Uhlenbeck process with T=20, N=50, h=0.02 $\eta=1.2$, $\varphi=1.6$ and $\theta=2.0$, $\sigma=\lambda=1$}
\label{fig:1}
\end{figure}
\newpage
The table \ref{table:1} shows the estimated values of the parameters $p$, $\eta$, $\phi$ and $\theta$ with different number of steps N and fixed $h=0.02$ and $T=Nh$
\begin{figure}%
\centering
\subfloat[\centering Estimated Values of p]
{{\includegraphics[width=5cm]{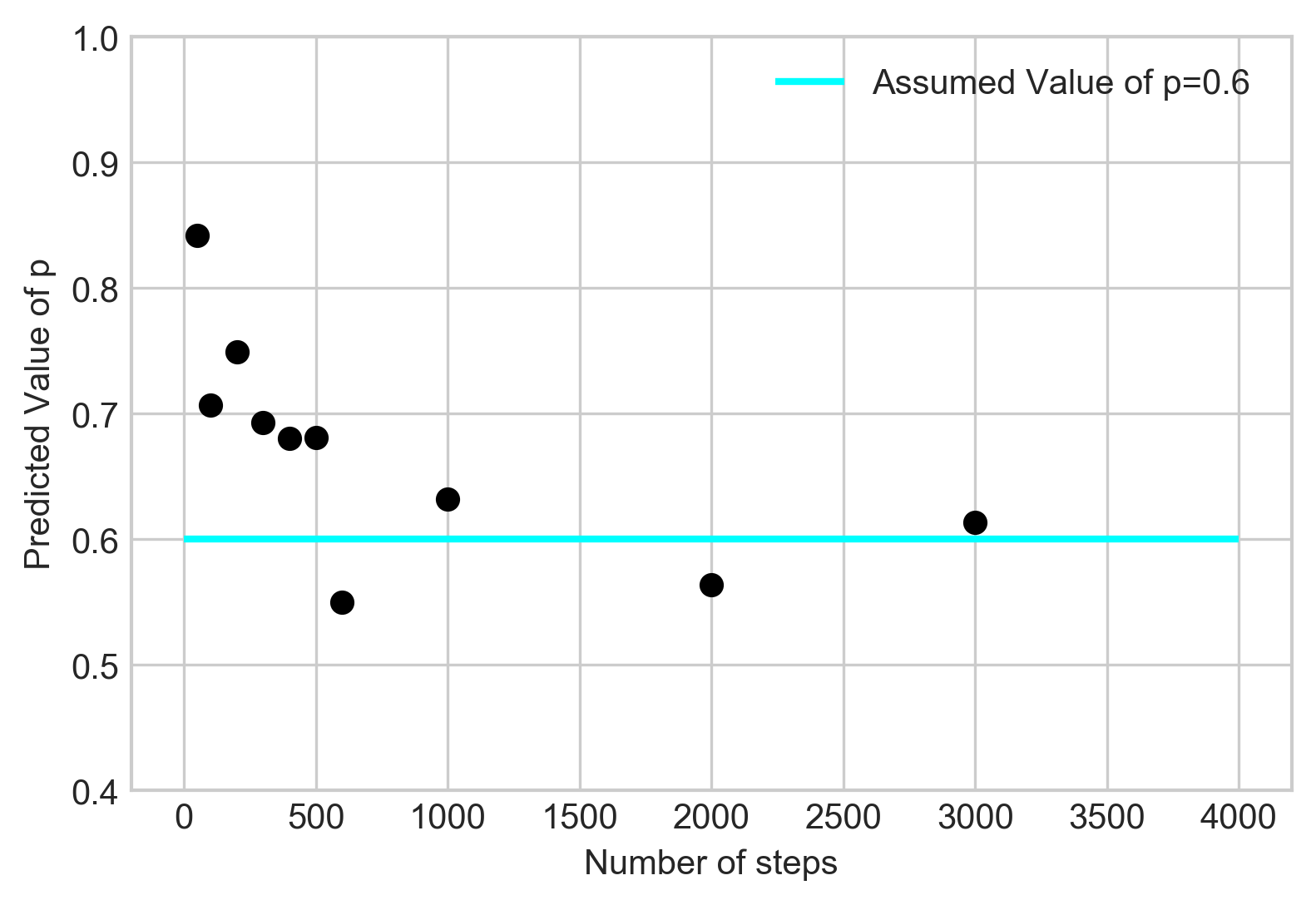} }}%
\qquad
\subfloat[\centering Estimated Values of $\eta$]
{{\includegraphics[width=5cm]{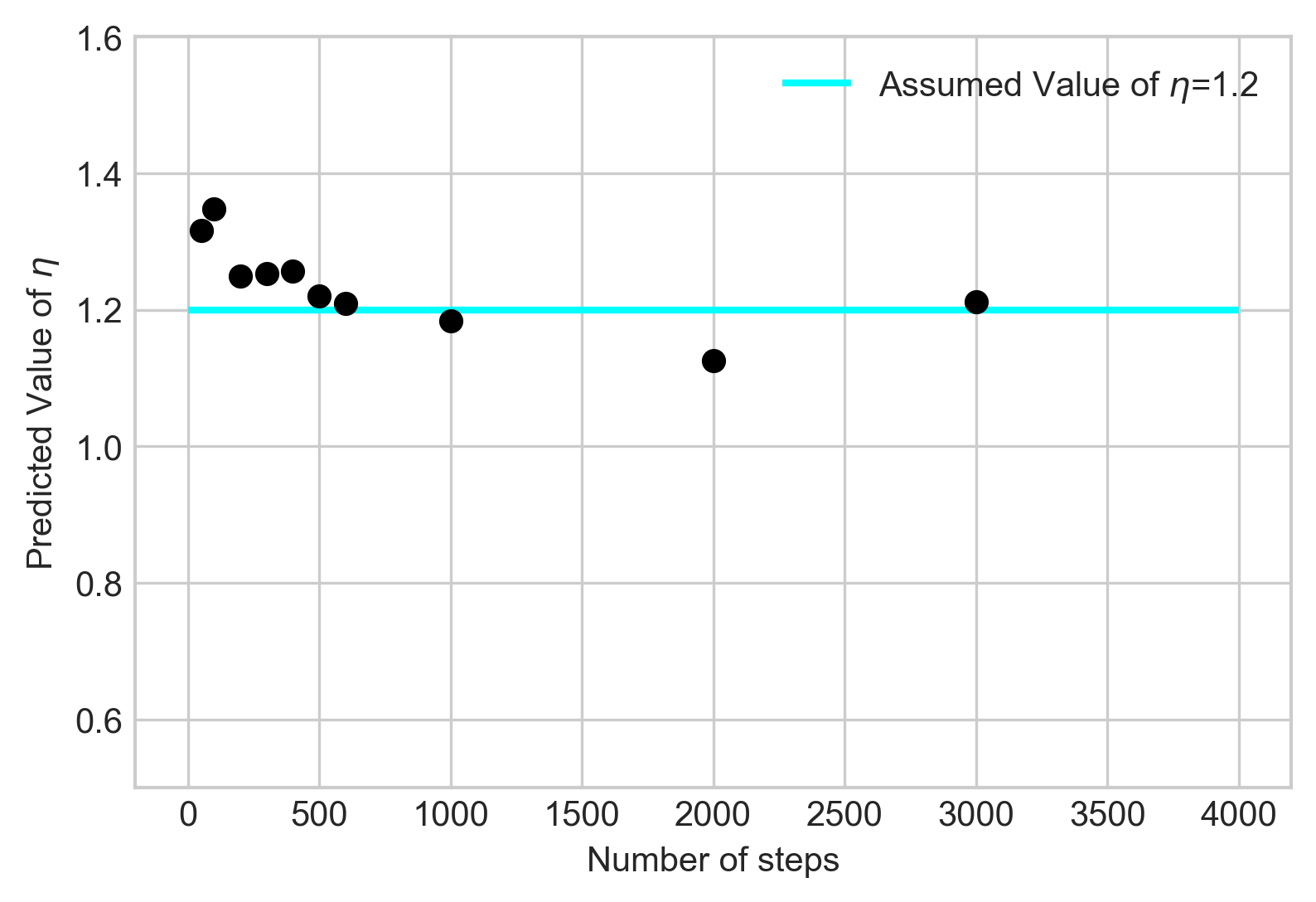} }}%
\caption{Assumed versus estimated values}%
\label{fig:example}%
\label{fig:2}
\end{figure}
\begin{figure}%
\centering
\subfloat[\centering Estimated Values of $\varphi$]
{{\includegraphics[width=5cm]{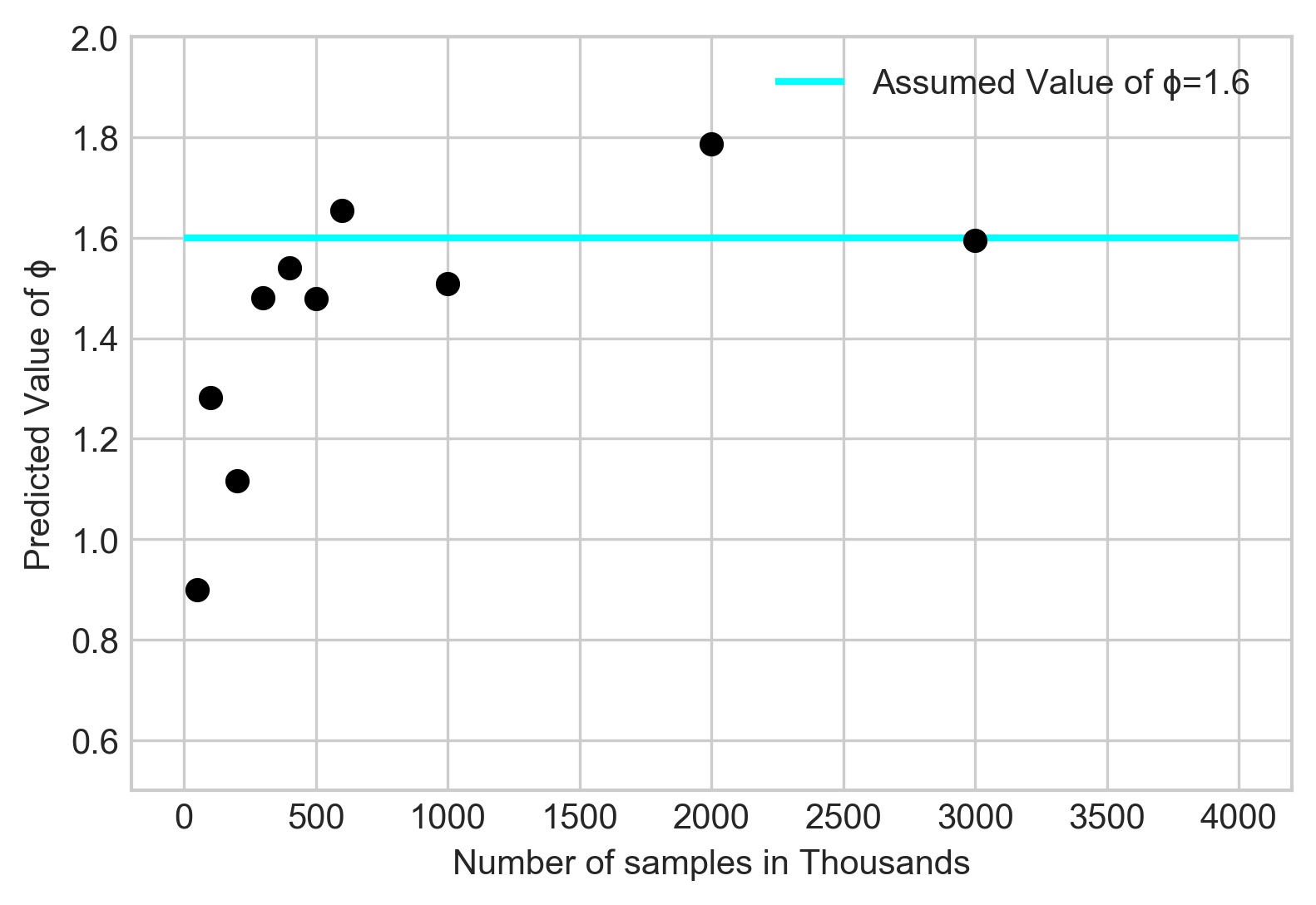} }}%
\qquad
\subfloat[\centering Estimated Values of $\theta$]
{{\includegraphics[width=5cm]{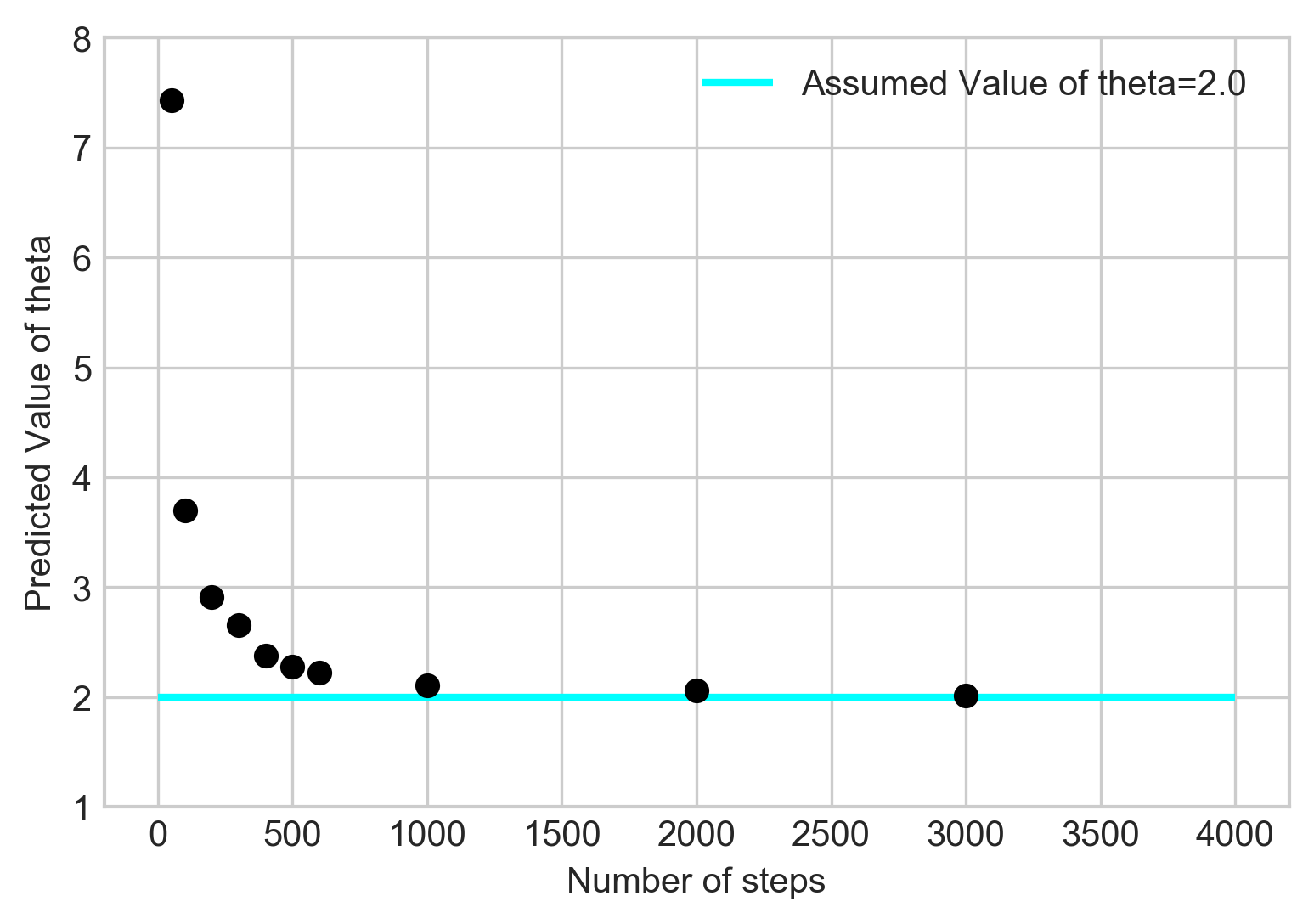} }}%
\caption{Assumed versus estimated values}%
\label{fig:3}
\label{fig:example}%
\end{figure}
\begin{table}[h!]
\centering
\begin{tabular}{||c c c c c c||} 
 \hline
  Time&Number of steps & $p=0.6$& $\eta=1.2$& $\phi=1.6$& $\theta=2.0$ \\ [0.5ex] 
 \hline\hline
 1&50&0.8421& 1.31677& 0.8995& 7.4297\\
 2&100&0.7070& 1.3477& 1.2816& 3.6995\\
 4&200&0.74925& 1.2498& 1.1164& 2.9127\\
 6&300&0.6928& 1.2532& 1.4803& 2.6587 \\
 8&400& 0.6804& 1.2571& 1.5397& 2.3808\\
 10&500& 0.6812& 1.2204& 1.4793& 2.2743\\
 12&600&0.5500& 1.2089& 1.6546& 2.2217\\
 20&1000& 0.6320& 1.1836& 1.5078& 2.1066\\
 40&2000&0.5635& 1.1255& 1.7866& 2.0631\\
 60&3000&0.6135& 1.2112& 1.5940& 2.0128\\ [1ex] 
 \hline
\end{tabular}
\caption{Assumed Values and Estimated values of the parameters with different number of steps N and fixed $h=0.02$ and $T=Nh$}
\label{table:1}
\end{table}

\section{Appendix:   Covariance matrix $A$}
In this section we give the expression of the covariance  matrix $A$ in Lemma \ref{l.4.1}. It is very sophisticated to express the entries of this matrix in terms of the parameters of the equation \eqref{e.2.1}. So, we keep them as expression of the invariant probability measure of $\XX_0$ and that of $\XX_{kh}$. First, we compute $\sigma_{g_1g_1}$. 
\begin{equation}
\begin{split}
\sigma_{g_1g_1} &=\cov(\XX_{0},\XX_{0}) +2\sum_{j=1}^\infty[\cov(\XX_{0},\XX_{jh})]   \\
&=\EE(\XX_o^2)-\EE(\XX_o)^2+2\sum_{j=1}^\infty\Big[\EE(\XX_o{\XX_{jh}})-\EE(\XX_o)\EE({\XX_{jh}})\Big]\\
&=\EE(\XX_o^2)-\EE(\XX_o)^2+2\sum_{j=1}^\infty\Big[\EE(\XX_o{\XX_{jh}})-\left[\EE(\XX_o)\right]^2 \Big]\,,  
\end{split}
\end{equation}
where we used $\EE({\XX_{jh}})=\EE({\XX_{0}})$. 
Now we compute $\sigma_{g_2g_2}$. 
\begin{equation}
\begin{split}
\sigma_{g_2g_2} &=\cov(\XX_{0}^2,\XX_{0}^2) +2\sum_{j=1}^\infty[\cov(\XX_{0}^2,\XX_{jh}^2)]   \\
&=\EE(\XX_o^4)-\EE(\XX_o^2)^2+2\sum_{j=1}^\infty\Big[\EE(\XX_o^2{\XX_{jh}}^2)-\EE(\XX_o^2)^2\Big]\,. 
\end{split}
\end{equation}
Similarly, we have 
\begin{equation}
\begin{split}
    \sigma_{g_3g_3} &=\cov(\XX_{0}^3,\XX_{0}^3) +2\sum_{j=1}^\infty[\cov(\XX_{0}^3,\XX_{jh}^3)]   \\
&=\EE(\XX_o^6)-\EE(\XX_o^3)^2+2\sum_{j=1}^\infty\Big[\EE(\XX_o^3{\XX_{jh}}^3)-\EE(\XX_o^3)^2\Big]
\end{split}
\end{equation}
and 
\begin{equation}
\begin{split}
    \sigma_{g_4g_4} &=\cov(\XX_{0}\XX_{h},\XX_{0}\XX_{h}) +2\sum_{j=1}^\infty[\cov(\XX_{0}\XX_{h},\XX_{jh}\XX_{(j+1)h})]   \\
&=\EE((\XX_{0}\XX_{h})^2)-\EE(\XX_{0}\XX_{h})^2\\
&\qquad +2\sum_{j=1}^\infty\Big[\EE(\XX_{0}\XX_{h}\XX_{jh}\XX_{(j+1)h})-\EE(\XX_{0}\XX_{h})\EE(\XX_{jh}\XX_{(j+1)h})\Big]\,. 
\end{split}
\end{equation}
$\sigma_{g_1g_2}$ is computed as follows. 
\begin{equation}
\begin{split}
    \sigma_{g_1g_2} &=\cov(\XX_{0},\XX_{0}^2) +\sum_{j=1}^\infty[\cov(\XX_{0},\XX_{jh}^2)+\cov(\XX_{0}^2,\XX_{jh})]   \\
&=\EE((\XX_{0})^3)-\EE(\XX_{0})\EE(\XX_{0}^2)+\sum_{j=1}^\infty\Big[\EE(\XX_{0}\XX_{jh}^2)-\EE(\XX_{0})\EE(\XX_{jh}^2)\\
&+\EE(\XX_{0}^2\XX_{jh})-\EE(\XX_{0}^2)\EE(\XX_{jh})\Big]\,. 
\end{split}
\end{equation}
In similar way we can get 
\begin{equation}
\begin{split}
    \sigma_{g_1g_3} &=\cov(\XX_{0},\XX_{0}^3) +\sum_{j=1}^\infty[\cov(\XX_{0},\XX_{jh}^3)+\cov(\XX_{0}^3,\XX_{jh})]   \\
&=\EE((\XX_{0})^4)-\EE(\XX_{0})\EE(\XX_{0}^3)+\sum_{j=1}^\infty\Big[\EE(\XX_{0}\XX_{jh}^3)-\EE(\XX_{0})\EE(\XX_{jh}^3)\\
&+\EE(\XX_{0}^3\XX_{jh})-\EE(\XX_{0}^3)\EE(\XX_{jh})\Big]
\end{split}
\end{equation}
and 
\begin{equation}
\begin{split}
\sigma_{g_1g_4} &=\cov(\XX_{0},\XX_{0}\XX_{h}) +\sum_{j=1}^\infty[\cov(\XX_{0},\XX_{jh}\XX_{(j+1)h})+\cov(\XX_{jh},\XX_{0}\XX_{h})]   \\
&=\EE(\XX_{0}^2\XX_{h})-\EE(\XX_{0})\EE(\XX_{0}\XX_{h})+\sum_{j=1}^\infty\Big[\EE(\XX_{0}\XX_{jh}\XX_{(j+1)h})-\EE(\XX_{0})\EE(\XX_{jh}\XX_{(j+1)h})\\
&+\EE(\XX_{0}\XX_{h}\XX_{jh})-\EE(\XX_{jh})\EE(\XX_{0}\XX_{h})\Big]\,. 
\end{split}
\end{equation}
 $\sigma_{g_2g_3}$ is similar to $ \sigma_{g_1g_2}$.
\begin{equation}
\begin{split}
    \sigma_{g_2g_3} &=\cov(\XX_{0}^2,\XX_{0}^3) +\sum_{j=1}^\infty[\cov(\XX_{0}^2,\XX_{jh}^3)+\cov(\XX_{0}^3,\XX_{jh}^2)]   \\
&=\EE((\XX_{0})^5)-\EE(\XX_{0}^2)\EE(\XX_{0}^3)+\sum_{j=1}^\infty\Big[\EE(\XX_{0}^2\XX_{jh}^3)-\EE(\XX_{0}^2)\EE(\XX_{jh}^3)\\
&+\EE(\XX_{0}^3\XX_{jh}^2)-\EE(\XX_{0}^3)\EE(\XX_{jh}^2)\Big]\,. 
\end{split}
\end{equation}
Finally, we have 
\begin{equation}
\begin{split}
\sigma_{g_2g_4} &=\cov(\XX_{0}^2,\XX_{0}\XX_{h}) +\sum_{j=1}^\infty[\cov(\XX_{0}^2,\XX_{jh}\XX_{(j+1)h})+\cov((\XX_{jh})^2,\XX_{0}\XX_{h})]   \\
&=\EE(\XX_{0}^3\XX_{h})-\EE(\XX_{0}^2)\EE(\XX_{0}\XX_{h})+\sum_{j=1}^\infty\Big[\EE(\XX_{0}^2\XX_{jh}\XX_{(j+1)h})-\EE(\XX_{0}^2)\EE((\XX_{jh}\XX_{(j+1)h})\\
&+\EE(\XX_{0}\XX_{h}\XX_{jh}^2)-\EE(\XX_{jh}^2)\EE(\XX_{0}\XX_{h})\Big]
\end{split}
\end{equation}
and 
\begin{equation}
\begin{split}
\sigma_{g_3g_4} &=\cov(\XX_{0}^3,\XX_{0}\XX_{h}) +\sum_{j=1}^\infty[\cov(\XX_{0}^3,\XX_{jh}\XX_{(j+1)h})+\cov((\XX_{jh})^3,\XX_{0}\XX_{h})]   \\
&=\EE(\XX_{0}^4\XX_{h})-\EE(\XX_{0}^3)\EE(\XX_{0}\XX_{h})+\sum_{j=1}^\infty\Big[\EE(\XX_{0}^3\XX_{jh}\XX_{(j+1)h})-\EE(\XX_{0}^3)\EE((\XX_{jh}\XX_{(j+1)h})\\
&+\EE(\XX_{0}\XX_{h}\XX_{jh}^3)-\EE(\XX_{jh}^3)\EE(\XX_{0}\XX_{h})\Big]\,. 
\end{split}
\end{equation}

\end{document}